\documentclass{article}
\usepackage[utf8]{inputenc}

\usepackage{enumerate}

\usepackage{amsmath, amsthm, amssymb, amsfonts, mathtools, mathrsfs, dsfont}
\usepackage{hyperref}
\usepackage{soul}

\usepackage{graphicx}
\usepackage{tikz}

\theoremstyle{plain}
\newtheorem{lemma}{Lemma}
\newtheorem{theorem}[lemma]{Theorem}
\newtheorem{proposition}[lemma]{Proposition}

\newtheorem{corollary}[lemma]{Corollary}
\newtheorem{remark}[lemma]{Remark}
\newtheorem*{defi}{Definition}
\newtheorem{defin}[lemma]{Definition}
\newtheorem{question}[lemma]{Question}

\newcommand{\ler}[1]{\left( #1 \right)} 

\newcommand{\Hom}{\mathsf{hom}}
\newcommand{\inj}{\mathsf{inj}}
\newcommand{\surj}{\mathsf{surj}}
\newcommand{\ihat}{\hat{i}}
\newcommand{\irhat}{\hat{i}_R}
\newcommand{\hhat}{\hat{h}}
\newcommand{\hrhat}{\hat{h}_R}
\newcommand{\Gq}{(G_q)_q^\infty}
\newcommand{\Gpp}{(G_q)_{q\in \text{Q}}}

\newcommand{\Rq}{(R_q)_q^\infty}

\newcommand{\prob}{\mathbb{P}}

\title{Logarithmic convergence of finite projective planes}
\author{Márton Borbényi\thanks{Eötvös Loránd University and HUN-REN Alfréd Rényi Institute of Mathematics, Budapest, Hungary. Email: \texttt{borbenyi.marton@renyi.hu}}, 
Panna Tímea Fekete \thanks{HUN-REN Alfréd Rényi Institute of Mathematics and Department of Computer Science and Information Theory, Faculty of Electrical Engineering and Informatics, Budapest University of Technology and Economics, Budapest, Hungary. Email: \texttt{fekete.panna.timea@renyi.hu}},
Aranka Hrušková\thanks{Weizmann Institute of Science, Israel. Email: \texttt{umim.cist@gmail.com}},
Ander Lamaison\thanks{Universidad P\'ublica de Navarra, Pamplona, Spain. Email: \texttt{ander.lamaison@unavarra.es}}}
\date{June 2026}

\begin{document}

\maketitle

\begin{abstract}
    In this paper, we study the so-called log-convergence of graphs defined by Balázs Szegedy \cite{Szegedy2015SparseGL}. We answer positively his Question 4 whether the sequence of the incidence graphs of projective planes over finite fields log-converges and whether the limit coincides with that of a particular random graph model.
\end{abstract}

\section{Introduction}

With roots in the Aldous--Hoover theory of exchangeable arrays, modern graph-limit theory first crystallised in the setting of dense graph sequences through the theory of graphons~\cite{AldousExchangeable, exchangeableVSgraphlimits, LovSzegedy2006, BorgsChayesLovSosVesztergombiI}, and has since been extended to other discrete structures, including hypergraphs, permutations, posets, and matroids~\cite{ElekSzegedy2012, permutons, posetsJanson, matroids}.
Meanwhile, other regimes of asymptotic edge density, such as bounded-degree graphs, have their own limit theories~\cite{BenjaminiSchramm2001, LeftRightBounded, HatLovSzegLocalGlobal}.
By contrast, sparse graph limits admit several inequivalent approaches, including normalised cut-metric theories, $L^p$ graphons, graphon-process and graphex theories, and action convergence~\cite{BollRiordanSparse, LpGraphonsI, GraphonProcesses, ActionConvergence}.

One such approach, called logarithmic convergence, was proposed in 2015 by Bal\'azs Szegedy~\cite{Szegedy2015SparseGL}. This notion was motivated by Sidorenko's conjecture \cite{Sidorenko} in which the central inequality becomes linear after taking logarithms. For technical reasons, we will restrict ourselves to considering only bipartite graphs, although Szegedy's definition applies to general graphs.

We denote by $\mathcal{B}$ the set of finite bipartite graphs whose vertex classes are labelled $V_1$ and $V_2$. That is,
\[
\mathcal{B}=\left\{G=(V_1(G),V_2(G),E(G)) : E(G)\subseteq V_1(G)\times V_2(G)\right\}.
\]
A homomorphism between two elements of $\mathcal{B}$ is a graph homomorphism that is label-preserving. That is, we will only count maps $\varphi:H\to G$ which are graph homomorphisms in the classical sense \emph{and} for which $\varphi(V_1(H))\subseteq V_1(G)$ and $\varphi(V_2(H))\subseteq V_2(G)$. Subsequently, this gives rise to the definition of the bipartite density of $H\in\mathcal{B}$ in $G\in\mathcal{B}$ as
\[
t_B(H,G):=\frac{\Hom(H,G)}{|V_1(G)|^{|V_1(H)|}|V_2(G)|^{|V_2(H)|}},
\]
where $\Hom(H,G)$ is the number of homomorphisms from $H$ to $G$.

The quantity $t_B(H,G)$ can be interpreted also as the probability that a randomly chosen label-preserving map $V(H)\to V(G)$ is a homomorphism, in which view
\[
d(H,G):=-\log t_B(H,G)
\]
denotes the Kullback-Leibler divergence of the uniform distribution on homomorphisms with respect to the uniform measure on all label-preserving maps.

\begin{defi}[Log-convergence]
Let $\mathcal{B}_0$ be the class of graphs of $\mathcal{B}$ with at least one edge. We say that a sequence $(G_n)_n^\infty$ of graphs in $\mathcal{B}_0$ is log-convergent if, for every pair of graphs $H_1, H_2\in \mathcal{B}_0$, the sequence
\begin{equation}\label{eqn:log_ratios}
\left(\frac{-\log t_B(H_1,G_n)}{-\log t_B(H_2,G_n)}\right)_n
\end{equation}
of real numbers has a limit.
\end{defi}

This definition can, however, be greatly simplified. Let $h(H,G)$ be the particular ratio
\[h(H,G)=\frac{d(H,G)}{d(K_2,G)}=\frac{-\log t_B(H,G)}{-\log t_B(K_2,G)},\]
where $K_2\in\mathcal{B}_0$ is the edge with partition. Then the following lemma tells us that, in a sense, log-convergence is in fact only concerned with the quantities $\log t_B(H,G)$ modulo edge density.

\begin{lemma}[Lemma 4.2 in \cite{Szegedy2015SparseGL}]\label{Lemma:logconvergent_h}
A graph sequence $(G_n)_n^\infty$ in $\mathcal{B}_0$ is log-convergent if and only if
\[\lim\limits_{n\to\infty}h(H,G_n) = \lim\limits_{n\to\infty}\frac{-\log t_B(H,G_n)}{-\log t_B(K_2,G_n)}\]
exists for every $H\in\mathcal{B}_0$. Every graph sequence in $\mathcal{B}_0$ has a log-convergent subsequence.
\end{lemma}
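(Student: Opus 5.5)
The equivalence follows from writing every ratio in \eqref{eqn:log_ratios} as a quotient of two values of $h$. The subsequential statement needs a uniform bound on $h$ and a diagonal argument.

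\emph{Equivalence.} First I check that $d(H,G)\in(0,\infty)$ for $H,G\in\mathcal{B}_0$, so that all the ratios are defined. Since $G$ has an edge $(u,v)$, the constant map sending $V_1(H)\mapsto u$ and $V_2(H)\mapsto v$ is a homomorphism, hence $t_B(H,G)>0$ and $d(H,G)<\infty$. For positivity I only need $t_B(H,G)<1$, that is, $G$ is not complete bipartite. When $G$ is complete bipartite every ratio is $0/0$; I would treat this degenerate case separately, either by the convention that the paper implicitly excludes such $G$, or by restricting to the indices where it does not occur. With this settled, the identity
\[\frac{d(H_1,G_n)}{d(H_2,G_n)}=\frac{h(H_1,G_n)}{h(H_2,G_n)}\]
gives one direction immediately, provided $\lim_n h(H_2,G_n)\neq 0$. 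The converse direction is trivial: take $H_2=K_2$.

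\emph{Two-sided bounds on $h$.} To guarantee that the limits are nonzero and that the sequences are bounded, I would prove
\[1\le h(H,G)\le |E(H)|\]
for every $H\in\mathcal{B}_0$ and every $G\in\mathcal{B}_0$ that is not complete bipartite.

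For the upper bound, a random label-preserving map is a homomorphism precisely when every edge of $H$ lands on an edge of $G$. The events "edge $e$ lands on an edge of $G$" are increasing events on a product space, so by the Harris--FKG inequality $t_B(H,G)\ge t_B(K_2,G)^{|E(H)|}$. This works because $t_B(K_2,G)$ is exactly the probability for a single edge. If FKG is awkward here, the standard alternative is Hölder or Cauchy--Schwarz. The bound is true in general for bipartite $H$, namely $t\ge p^{e}$ via Sidorenko-type weak bounds; the clean route is to say that $\Hom(H,G)\ge$ the count obtained by fixing a spanning forest and using convexity.

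I expect this upper bound to be the main obstacle. The fallback is the following weaker argument. Pick a spanning forest $F$ of $H$. Then $t_B(F,G)\ge t_B(K_2,G)^{|E(F)|}$, because trees satisfy Sidorenko by Jensen applied to degrees. I then compare $H$ with $F$ only via $t_B(H,G)\ge$ the constant-map count $\frac{|E(G)|}{|V_1|^{|V_1(H)|}|V_2|^{|V_2(H)|}}$. That gives $h\le C(H)$ only when $d(K_2,G)$ is bounded below, which is not sufficient in general. So I would rather rely on the genuine inequality $t\ge p^{|E(H)|}$: the homomorphism indicator is a product of increasing functions of the independent coordinates, and the FKG inequality for product measures applies, with each vertex image ordered by degree. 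A single total order that makes all the indicators monotone does not obviously exist, however, and this is exactly the delicate point. The backup is to accept any finite bound $h\le C(H)$.

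The lower bound $h\ge 1$ is elementary. For any edge $e$ of $H$, the event that the map is a homomorphism is contained in the event that $e$ maps onto an edge, so $t_B(H,G)\le t_B(K_2,G)$.

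\emph{Compactness.} With $h(H,G_n)\in[1,C(H)]$ for every $n$, and $\mathcal{B}_0$ countable up to isomorphism, I enumerate $H^{(1)},H^{(2)},\dots$. I then extract nested subsequences along which $h(H^{(k)},\cdot)$ converges, using Bolzano--Weierstrass, and take the diagonal subsequence. By the first part of the argument this subsequence is log-convergent, since all the limits lie in $[1,C(H)]$ and in particular are nonzero.
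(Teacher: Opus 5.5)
\textbf{There is a genuine gap: the uniform upper bound on $h$.}

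Your equivalence argument is fine. It only needs $d(K_2,G)>0$ and the lower bound $h(H,G)\ge 1$, which you prove correctly. So every finite limit of $h(H_2,G_n)$ is at least $1$, and the ratios $h(H_1,G_n)/h(H_2,G_n)$ converge. For complete bipartite $G_n$, do not discard indices; that is illegitimate if they occur infinitely often. Instead use the paper's convention $h(H,G)=|E(H)|$, which is consistent with the bounds below.

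The subsequence statement rests entirely on a bound $h(H,G)\le C(H)$ that is uniform in $G$. Without it Bolzano--Weierstrass does not apply. Moreover, if $h(H,G_n)\to\infty$ for some $H$, no subsequence would be log-convergent.

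The bound you aim for, $t_B(H,G)\ge t_B(K_2,G)^{|E(H)|}$ for every bipartite $H$ (that is, $h\le|E(H)|$), is exactly Sidorenko's conjecture in its original two-sided form. This is the ``linear after taking logarithms'' inequality the paper mentions, and it is open. Your FKG argument cannot work, as you suspected. A product order making all edge indicators increasing would require $E(G)$ to be an up-set, which fails already for a perfect matching; if such an order always existed, Sidorenko's conjecture would be trivial. No standard H\"older or Cauchy--Schwarz argument gives the bound either. Your spanning-forest fallback goes the wrong way, since $t_B(H,G)\le t_B(F,G)$. ``Accepting any finite bound'' is not a proof.

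\textbf{The missing idea.} Compare $H$ with the complete bipartite graph $K_{a,b}$ on the same labelled bipartition, where $a=|V_1(H)|$ and $b=|V_2(H)|$. Since $H$ is a spanning subgraph of $K_{a,b}$, every homomorphism $K_{a,b}\to G$ is also one from $H$, so $t_B(H,G)\ge t_B(K_{a,b},G)$.

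Now write $n_i=|V_i(G)|$ and $p=t_B(K_2,G)$. Let $N(y_1,\dots,y_b)\subseteq V_1(G)$ be the common neighbourhood of independent uniform $y_1,\dots,y_b\in V_2(G)$. Applying Jensen twice gives
\[
t_B(K_{a,b},G)=\mathbb{E}_{y_1,\dots,y_b}\left[\left(\frac{|N(y_1,\dots,y_b)|}{n_1}\right)^{a}\right]\geq\left(\mathbb{E}_{x\in V_1(G)}\left[\left(\frac{\deg x}{n_2}\right)^{b}\right]\right)^{a}\geq p^{ab}.
\]
Hence $1\le h(H,G)\le |V_1(H)||V_2(H)|$ for every $G\in\mathcal{B}_0$, and your diagonal argument then goes through unchanged. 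The paper only quotes this lemma from Szegedy without proof, so there is no in-paper argument to compare against.
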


The ratios of the form (\ref{eqn:log_ratios}) are not defined when $G_n$ is a complete bipartite graph. In this case, following the same criterion as Szegedy, we define $h(H,G)=|E(H)|$ for any complete bipartite graph $G$.

Lemma \ref{Lemma:logconvergent_h} tells us that the trivial limit object for a log-convergent sequence $(G_n)_{n=1}^\infty$ is the vector $\left(\lim_n h(H_1,G_n),\lim_n h(H_2,G_n),\dots\right)\in\mathbb{R}_{\geq0}^{\mathcal{B}_0}$, where $H_1,H_2,\dots$ is some fixed enumeration of $\mathcal{B}_0$. As of now, the theory of log-convergence lacks a good analytical or algebraic limit object akin to graphon or graphing, and so whenever we will be talking about a limit, we will simply mean an element of $\mathbb{R}_{\geq0}^{\mathcal{B}_0}$.

Hand in hand with any theory of graph convergence goes a random model reflecting its salient features.
When analytical limit objects like graphons are at our disposal, we can typically use them to generate random graph sequences that almost surely converge to the limit object we started with. While as pointed out above, we currently do not have such a non-trivial limit, Szegedy introduced a random bipartite model that satisfies the almost sure convergence and captures a wide array of structural scenarios.
As opposed to the Erdős-Rényi model $\mathcal{G}(n,p)$ which has only one parameter determining the asymptotic behaviour, namely the edge density $p$, Szegedy's model requires two parameters: $\beta$ accounts for edge density, and $\alpha$ accounts for the relative sizes of the two vertex classes in the bipartition. This random model $R(n,\beta,\alpha)$ will be defined in Section~\ref{section:preliminaries}.
Szegedy proved that, for any fixed choice of the two parameters, the sequence $\left(R(n,\beta,\alpha)\right)_{n=1}^\infty$ of random bipartite graphs log-converges with probability 1, and gave an explicit description of the limit $R(\beta, \alpha)\in\mathbb{R}_{\geq0}^{\mathcal{B}_0}$.

While studying the random model $R(n,\beta,\alpha)$, Szegedy asked whether the incidence graphs of finite projective planes are pseudorandom in the log-convergence framework. In particular, let $PG(2,q)$ be the projective plane over the finite field $\mathbb{F}_q$. Then its incidence graph $G_q$
is the bipartite graph in which the class $V_1$ is the set of points of $PG(2,q)$, the class $V_2$ is the set of lines of $PG(2,q)$, and an edge is drawn between a point $p$ and a line $\ell$ if $p\in\ell$. Szegedy asked whether the sequence of incidence graphs of projective planes is log-convergent, when the parameter $q$ ranges over the set of primes. He further asked, in case the sequence is convergent, whether the limit is $R(3/4, 1/2)$, the same as the limit of a specific sequence of random bipartite graphs. We answer both questions in the affirmative, even when $q$ is allowed to be a prime power.

\begin{theorem}\label{thm:main}
The sequence $\Gpp$ of the incidence graphs of the projective planes $PG(2,q)$, where $q$ ranges over the set $Q$ of prime powers, is log-convergent. Moreover, its limit is $R(3/4, 1/2)$.
\end{theorem}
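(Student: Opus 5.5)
The plan is to compute, for each fixed $H\in\mathcal{B}_0$, the asymptotic exponent $\lim_{q}\log \Hom(H,G_q)/\log N$, where $N=q^2+q+1=|V_1(G_q)|=|V_2(G_q)|$. I will then check that this exponent agrees with the value of $R(3/4,1/2)$ at $H$, using the explicit description of $R(\beta,\alpha)$ from Section~\ref{section:preliminaries}. Since $t_B(K_2,G_q)=(q+1)/N=N^{-1/2+o(1)}$, we have $d(K_2,G_q)=(\tfrac12+o(1))\log N$. Hence $h(H,G_q)$ converges as soon as $\log\Hom(H,G_q)/\log N$ converges, and by Lemma~\ref{Lemma:logconvergent_h} this gives log-convergence. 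My heuristic for the target value is that $R(n,3/4,1/2)$ should behave like a balanced bipartite graph of density $n^{-1/2}$. In that model $\Hom(H,\cdot)$ is dominated by the best label-preserving quotient $H'$ of $H$, counted with weight $N^{v(H')-e(H')/2}$ but truncated wherever a vertex would be forced onto $\ge 2$ prescribed neighbours.

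First I write $\Hom(H,G_q)=\sum_{\pi}\inj(H/\pi,G_q)$, summing over label-preserving partitions $\pi$ of $V(H)$ whose quotients are simple graphs. There are boundedly many such $\pi$, so the exponent is the maximum of the exponents of $\inj(H',G_q)$ over the quotients $H'$.

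\textbf{Upper bound.} For a simple bipartite $H'$ and any ordering $v_1,\dots,v_k$ of $V(H')$, I embed greedily. A vertex with no earlier neighbour has at most $N$ choices, and a vertex with exactly one earlier neighbour has $q+1\le N^{1/2+o(1)}$ choices. A vertex with $\ge2$ earlier neighbours has at most one choice: two distinct points lie on a unique line, two distinct lines meet in a unique point, and injectivity makes the images of the earlier neighbours distinct. This gives
\[
\inj(H',G_q)\le N^{\,a(\sigma)+b(\sigma)/2+o(1)}
\]
for every ordering $\sigma$, where $a(\sigma)$ and $b(\sigma)$ count the vertices with $0$ and $1$ earlier neighbours respectively.

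\textbf{Lower bound.} I aim for a matching lower bound $N^{\min_\sigma(a+b/2)-o(1)}$ for the optimal quotient $H'$. I would construct embeddings one vertex at a time along an ordering, choosing each free vertex generically, each one-neighbour vertex generically on or through its neighbour, and each two-neighbour vertex as the forced intersection or join. The point is that the bad events only cut the count by a constant factor, so the count is still of order $N^{a+b/2}$. There are two bad events to exclude: an image coinciding with an earlier one, which would break injectivity, and the forced vertex being incident to a further earlier vertex that should not be adjacent to it, which would create an extra edge. Each bad event confines a choice to a subset of relative size $O(1/q)$ among $N$ or $q+1$ options, by a union bound over boundedly many earlier vertices. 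The configuration cases are the same for every prime power $q$, so the argument is uniform over $Q$. One subtlety is a vertex with $\ge3$ earlier neighbours in $H'$. Such a vertex forces collinearity or concurrency, which should be treated as an extra constraint costing a factor $N^{-1/2}$. My plan is to circumvent this by taking the maximum over quotients: a quotient $H'$ and an ordering in which every vertex has at most two earlier neighbours should realise the maximum. Sanity checks: $C_4$ gives exponent $2$ via its path quotient, and $C_6$ gives exponent $3$ via triangles in general position.

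\textbf{Main obstacle.} I expect the main obstacle to be the combinatorial identity showing that
\[
\max_{H'}\ \min_{\sigma}\bigl(a(\sigma)+b(\sigma)/2\bigr)
\]
over quotients equals the lower-bound exponent, and that both coincide with Szegedy's formula for $R(3/4,1/2)$. The natural route is to prove that the random model's exponent is also governed by the same greedy and degeneracy quantity, since in $R(n,3/4,1/2)$ two vertices likewise have $O(1)$ common neighbours. My first attempt would be an induction on $v(H)$: remove a vertex with at most two neighbours in an optimal quotient, and compare term by term with the recursive or variational description of $R(\beta,\alpha)$.
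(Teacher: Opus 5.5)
\textbf{There is a genuine gap.} Your reduction to quotients is Proposition~\ref{prop:max}. Your greedy upper bound $\inj(H',G_q)\le N^{a(\sigma)+b(\sigma)/2+o(1)}$ is correct; it is the easy direction of Corollary~\ref{cor:remove}, applied vertex by vertex.

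\textbf{The lower-bound mechanism is not valid.} At a vertex with two earlier neighbours nothing is chosen. So the event that its forced image equals an earlier image is not a set of relative size $O(1/q)$ that a union bound can remove. It is dictated by the earlier configuration and can hold identically. Take $C_4$ with the ordering $p_1,\ell_1,p_2,\ell_2$: every vertex has at most two earlier neighbours and $a+b/2=2$, yet the forced line $\ell_2$ is always $\ell_1$, so $\inj(C_4,G_q)=0$. Pappus- and Desargues-type incidence theorems produce subtler forced coincidences. You also give no argument that the maximising quotient admits such an ordering.

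Both issues disappear if you count homomorphisms instead of injections. For an ordering with at most two earlier neighbours per vertex, $\Hom(H',G_q)\ge N^{a}(q+1)^{b}$ trivially, and $\Hom(H,G_q)\ge\Hom(H',G_q)$ for every collapse $H'$. An $R$-critical collapse attaining $\hrhat(H)$ is $2$-degenerate: peel vertices of degree at most $2$ as in Cases~1, 2.2.1 and~3 of the proof of Proposition~\ref{prop:critical}. This is exactly the inequality $\hhat(H)\ge\hhat(H-v)$ the paper exploits.

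\textbf{The upper bound, which you defer as the ``main obstacle'', is missing its central step.} In base $q$ your exponent is
$g(H')=\min_\sigma(2a+b)=\irhat(H')+\min_\sigma\sum_v\max(0,e_v-2)$, where $e_v$ counts earlier neighbours. So $g(H')\ge\irhat(H')$, with equality exactly for $2$-degenerate $H'$. The exponent in $R(3/4,1/2)$, by contrast, is $\hrhat(H)=\max_{H'}\irhat(H')$, with no truncation; your heuristic description of the random model is off here. You must therefore show that every non-$2$-degenerate quotient is beaten by some collapse. This is precisely what Proposition~\ref{prop:critical}, Lemma~\ref{lemma:game} and Proposition~\ref{prop:finaltree} accomplish.

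An induction deleting a vertex with at most two neighbours cannot do this. The hard graphs are those of minimum degree at least $3$, which have no such vertex, and there a third incidence need not cost anything: $\ihat(\bar P)=10>\irhat(\bar P)=9$.

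The paper handles such graphs in two ways:
\begin{itemize}
\item For $2$-connected $H$, it compares with the star $K_{1,\lceil|V(H)|/2\rceil}$. This needs $\ihat(H)\le|V(H)|/2+1$, which is proved by deleting edges while preserving $2$-connectivity and tracking degree-$2$ vertices and pseudoleaves.
\item With a cut vertex even the star comparison fails. For example, a $6$-cycle whose six vertices are each joined by a bridge to its own copy of the Pappus graph has $\ihat=60>59=|V|/2+2$. So the paper splits at cut vertices using the exact wedge rule of Lemma~\ref{lemma:unions}, which rests on the $2$-transitivity of the collineation group; your greedy exponent need not obey that rule.
\end{itemize}
None of this has a counterpart in your proposal.
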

This statement is remarkable, because if we change the definition of the density $t_B$ to count only injective homomorphisms rather than all homomorphisms, the incidence graph of the projective plane behaves differently than the random bipartite graph. We will see some examples of this in the next section.

\addvspace{20pt}

\textbf{Acknowledgements.}
This project was born at the workshop \textit{Interfaces of the Theory of Combinatorial Limits} held in Erdős Center in Budapest in March 2022.

\section{Preliminaries}\label{section:preliminaries}

In this paper, all graphs mentioned belong to $\mathcal{B}_0$, and in particular are finite and bipartite. Through the rest of the paper, $\inj(H,G)$ and $\surj(H,G)$ denote, respectively, the number of injective homomorphisms and surjective homomorphisms from $H$ to $G$ (remember that we only count those homomorphisms sending $V_i(H)$ to $V_i(G)$).

Earlier, we defined log-convergence in terms of the fraction $\frac{-\log t_B(H_1,G_n)}{-\log t_B(H_2,G_n)}$.
Lemma \ref{Lemma:logconvergent_h} says that the only parameters that we are interested in are the \emph{exponents} to which we need to raise the edge density in order to obtain the other densities $t_B(H,G)$. This is what allows us to distinguish between graph sequences which, in the classical definition of graph convergence, would have as limit the zero graphon. This control over the exponents of densities is reflected in the following definition of a random bipartite graph model.
\begin{defi}[Random graph model]
Let $\beta\in(0,1]$ and $\alpha\in(0,1)$ be fixed. We denote by $G(n,\beta,\alpha)$ the probability distribution on bipartite graphs with $|V_1|=\lceil n^{\alpha} \rceil$, $|V_2|=\lceil n^{1-\alpha} \rceil$ given by including each of the $\lceil n^{\alpha}\rceil\lceil n^{1-\alpha}\rceil$ possible edges with probability $n^{\beta-1}$, independently of each other.
\end{defi}

Szegedy proves in \cite{Szegedy2015SparseGL} that for every fixed $\beta, \alpha>0$, a sequence drawn from $G(n,\beta,\alpha)$ log-converges with probability 1, and denotes by $R(\beta,\alpha)$ the collection of the limits $R(\beta,\alpha,H)=\lim_nh\left(H,G(n,\beta,\alpha)\right)$. The quantity $R(\beta,\alpha,H)$ is in fact defined to be
\[
\min_{H'\in\mathcal{C}(H)}\left\{|E(H')|+\frac{\alpha}{1-\beta}\ler{|V_1(H)-|V_1(H')|}+\frac{1-\alpha}{1-\beta}\ler{|V_2(H)-|V_2(H')|}\right\}
\]
which in turn is proven to be equal to the limit in probability $\lim_nh\left(H,G(n,\beta,\alpha)\right)$.

\begin{remark}
    The question whether the limits $R(\beta,\alpha)$ for different values of the parameters $\beta\in(0,1]$ and $\alpha\in(0,1)$ are distinct has a somewhat surprising answer, depicted in Figure~\ref{fig:parameter_square}.
    When $\alpha>\beta$, then $R(\beta,\alpha,K_{1,2})=1+\frac{1-\alpha}{1-\beta}<2$, and when $\alpha+\beta<1$, then $R(\beta,\alpha,K_{2,1})=1+\frac{\alpha}{1-\beta}<2$, telling us that for any $(\alpha,\beta)$ in the lower triangle of the parameter space as illustrated in Figure~\ref{fig:parameter_square}, there is no other pair of parameters representing the same limit. We conclude the same for the upper triangle by running through $K_{k,\ell}$ for arbitrarily large integers $k,\ell$ as opposed to just $K_{1,2}, K_{2,1}$.
    However, in the left triangle ($\alpha\leq\beta$ and $\alpha+\beta\leq1$), the dominant collapse (cf. Definition~\ref{def:collapse}) of a connected test graph $H$ is always the star which merges all vertices of $V_1$ into a single one while keeping those of $V_2$ separate, while in the right triangle ($\alpha\geq\beta$ and $\alpha+\beta\geq1$), it is always the star which merges all vertices of $V_2$ into a single one, which allows us to only ever learn $\frac{\alpha}{1-\beta}$ in the former case and $\frac{1-\alpha}{1-\beta}$ in the latter.
\end{remark}

\begin{figure}
    \centering
    {
\begin{tikzpicture}[scale=7, font=\small]

  \fill[blue!10]   (0,0) -- (0,1) -- (0.5,0.5) -- cycle;
  \fill[red!10]    (1,0) -- (1,1) -- (0.5,0.5) -- cycle;
  \fill[green!10]  (0,0) -- (1,0) -- (0.5,0.5) -- cycle;
  \fill[yellow!20] (0,1) -- (1,1) -- (0.5,0.5) -- cycle;

  \draw[thick] (0,0) rectangle (1,1);

  \draw[thick, dashed] (0,0) -- (1,1);   
  \draw[thick, dashed] (0,1) -- (1,0);   

  \draw[->] (-0.08,0) -- (1.12,0) node[right] {$\alpha$};
  \draw[->] (0,-0.08) -- (0,1.12) node[above] {$\beta$};

  \foreach \x/\xl in {0.5/$\tfrac{1}{2}$, 1/$1$} {
    \draw (\x, 0.015) -- (\x, -0.015) node[below] {\xl};
  }
  \foreach \y/\yl in {0.5/$\tfrac{1}{2}$, 1/$1$} {
    \draw (0.015, \y) -- (-0.015, \y) node[left] {\yl};
  }
  \node[below left] at (0,0) {$0$};

  \node[rotate=45,  above left,  font=\footnotesize] at (0.72,0.72)
    {$\alpha=\beta$};
  \node[rotate=-45, above right, font=\footnotesize] at (0.28,0.72)
    {$\alpha+\beta=1$};


  \node[align=center, font=\small] at (0.5, 0.78)
    {distinct};

  \node[align=center, font=\small] at (0.5, 0.22)
    {distinct};

  \node[align=center, font=\scriptsize, text width=2.8cm] at (0.18, 0.5)
    {$R(\beta,\alpha)=R\ler{\beta'\!,\alpha'}$\\[2pt]
     if and only if\\$\dfrac{\alpha}{1-\beta}=\dfrac{\alpha'}{1-\beta'}$};

  \node[align=center, font=\scriptsize, text width=2.8cm] at (0.82, 0.5)
    {$R(\beta,\alpha)=R\ler{\beta'\!,\alpha'}$\\[2pt]
     if and only if\\$\dfrac{1-\alpha}{1-\beta}=\dfrac{1-\alpha'}{1-\beta'}$};

\end{tikzpicture}   
    }
    \caption{Depending on the values of the parameters $\beta\in(0,1]$ and $\alpha\in(0,1)$, the limits $R(\beta,\alpha)$ are not always necessarily distinct}
    \label{fig:parameter_square}
\end{figure}

We now shift our attention to the graph sequence whose log-convergence we wish to prove.
For every finite projective plane, there is a positive integer $n$, known as the order of the plane, such that the plane has $n^2+n+1$ points, $n^2+n+1$ lines, $n+1$ points on each line, and $n+1$ lines through each point. It is a long-standing open question whether $n$ can ever be a number that is not a prime power. We denote by $P$ the set of primes and by $Q$ the set of prime powers.

For any division ring $K$, we can construct a projective plane $PG(2,K)$ by treating the 1-dimensional subspaces of $K^3$ as points and the 2-dimensional ones as lines.
Any finite division ring is necessarily a field, and for any prime power $q$, there exists a unique field with $q$ elements, allowing us to use the notation $PG(2,q)$ for the unique projective plane arising from this field.

Looking at the terms of the largest order, we can see that the graph $G(q^4, 3/4, 1/2)$ would have similar part sizes ($q^2$ each) and expected edge density ($\mathbb{E}\left[t_B(K_2,G)\right]=q^{-1}$) as $G_q$, the incidence graph of $PG(2,q)$. Szegedy asks whether the sequence $(G_q)_{q\in P}$ is pseudorandom, meaning, whether it converges to $R(3/4,1/2)$. In the rest of the paper, we will prove that $(G_q)_{q\in Q}$ converges to $R(3/4,1/2)$ by showing increasing similarity between $G_q$ and $R_q:=G(q^4,3/4,1/2)$.

The following definition plays a crucial role in the description of the limit in our main result, as well as in its proof.

\begin{defin}[Collapse]\label{def:collapse}
A graph $H'$ is a \emph{collapse}\footnote{Szegedy uses the term \emph{homomorphic image} for collapses in \cite{Szegedy2015SparseGL}. } of a graph $H$ if there is a graph homomorphism $\varphi$ from $H$ to $H'$ which is both vertex- and edge-surjective. Let $\mathcal{C}(H)$ denote the set of collapses of $H$.
\end{defin}

Since we only care about the ratio of logarithms, we are free to choose the base of such logarithm. Given the parametrization of our graphs, it seems natural to take $q$ as the base of the logarithm, because the edge density of our graph is $(1+o(1))q^{-1}$. That means that $\lim_{q\rightarrow\infty}-\log_qt_B(K_2, G_q)=1$, cancelling the denominator in Lemma~\ref{Lemma:logconvergent_h}. In addition, observe that $\log_qt_B(H, G_q)=\log_q\Hom(H, G_q)-|V(H)|\log_q(q^2+q+1)$, and so by Lemma~\ref{Lemma:logconvergent_h}, to guarantee the log-convergence of $(G_q)_{q\in Q}$, it is enough to verify that $\log_q\Hom(H, G_q)$ has a limit for every $H\in\mathcal{B}_0$.

First we are going to show that by restricting ourselves to an adequate subsequence of prime powers, we can assume that $\lim_{q\rightarrow \infty}\log_q \inj(H,G_q)$ and $\lim_{q\rightarrow\infty}\log_q\Hom(H, G_q)$ exist for all graphs $H$, where $\log_q0=-\infty$.

\begin{lemma}\label{lemma:subsequence}
There exists a sequence $(q_i)_{i=1}^\infty$ of prime powers such that $\ler{\log_{q_i}\inj(H, G_{q_i})}_{i=1}^\infty$ and $\ler{\log_{q_i}\Hom(H, G_{q_i})}_{i=1}^\infty$ converge for every $H\in\mathcal{B}_0$.
Moreover, if Theorem~\ref{thm:main} is false, then such a sequence can be chosen in a way which ensures that $\lim_{i\rightarrow\infty}\log_{q_i}\Hom(H, G_{q_i})\neq \lim_{i\rightarrow\infty}\log_{q_i}\Hom(H, R_{q_i})$ for at least one $H\in\mathcal{B}_0$.
\end{lemma}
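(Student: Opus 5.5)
The plan is a compactness and diagonalization argument, using only crude a priori bounds. First I bound the quantities. For every $H\in\mathcal{B}_0$ and every prime power $q$ we have $\inj(H,G_q)\le \Hom(H,G_q)\le (q^2+q+1)^{|V(H)|}$. Hence $\log_q\Hom(H,G_q)$ and $\log_q\inj(H,G_q)$ lie in $[-\infty, 3|V(H)|]$ for $q\ge 2$, since $q^2+q+1\le q^3$. Also $\Hom(H,G_q)\ge 1$, because $G_q$ has an edge and $H$ is bipartite with labelled classes: map all of $V_1(H)$ to one point and all of $V_2(H)$ to a line through it. So $\log_q\Hom(H,G_q)\in[0,3|V(H)|]$.

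For $\inj$ the value $-\infty$ may occur, so I work in the compact space $[-\infty, 3|V(H)|]$, which is homeomorphic to a closed interval. Every sequence in it therefore has a convergent subsequence, where convergence to $-\infty$ is allowed, matching the convention $\log_q0=-\infty$. The class $\mathcal{B}_0$ is countable up to isomorphism, and both quantities are isomorphism invariant. I enumerate the countably many functionals $H\mapsto\log_q\inj(H,G_q)$ and $H\mapsto\log_q\Hom(H,G_q)$ as $f_1,f_2,\dots$. Starting from $Q$ listed increasingly, I extract nested subsequences, each making $f_j$ converge, and then take the Cantor diagonal sequence. Along it every $f_j$ converges, which gives the first claim.

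For the ``moreover'' part, I suppose Theorem~\ref{thm:main} fails and choose the starting sequence more carefully. By Szegedy's result, $h(H,R_q)\to R(3/4,1/2,H)$ (almost surely or in probability; I fix the realization or read $\Hom(H,R_q)$ as its typical value, as the paper intends). Since $R_q$ has parts of size $q^2$ and edge density $q^{-1}$, this is equivalent to $\log_q\Hom(H,R_q)\to 2|V(H)|-R(3/4,1/2,H)=:L_H$. By the reduction explained just before the lemma, $G_q\to R(3/4,1/2)$ means exactly that $-\log_q t_B(H,G_q)\to R(3/4,1/2,H)$ for all $H$. Since $\log_q(q^2+q+1)\to 2$ and $-\log_q t_B(K_2,G_q)\to1$, this is equivalent to $\log_q\Hom(H,G_q)\to L_H$ for all $H$. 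Here I use that $\Hom(H,G_q)\ge1$, so the difference $\log_q\Hom(H,G_q)-|V(H)|\log_q(q^2+q+1)$ stays finite. So if the theorem fails, there are some $H_0$, some $\varepsilon>0$ and an infinite set $Q'\subseteq Q$ with $|\log_q\Hom(H_0,G_q)-L_{H_0}|\ge\varepsilon$ for all $q\in Q'$. I run the diagonal extraction inside $Q'$. The limit along the resulting $(q_i)$ then differs from $L_{H_0}=\lim_i\log_{q_i}\Hom(H_0,R_{q_i})$ by at least $\varepsilon$.

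The only delicate point is this translation step. It requires being careful that log-convergence to $R(3/4,1/2)$, via Lemma~\ref{Lemma:logconvergent_h}, is equivalent to convergence of $\log_q\Hom(H,G_q)$ to the random-model values. It also requires handling the randomness in $R_q$, either almost surely or in probability. The compactness part itself is routine.
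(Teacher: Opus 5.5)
Your proposal is correct and follows the paper's proof: you use the same a priori bound placing every value in $\{-\infty\}\cup[0,3|V(H)|]$, the same compactness step (your Cantor diagonal extraction is exactly the proof that the countable product is sequentially compact), and for the second part you first restrict to prime powers bounded away from the random-model value before extracting, which is equivalent to the paper's choice of a different accumulation point. Your explicit translation of the failure of Theorem~\ref{thm:main} into $\log_q\Hom(H_0,G_q)\not\to 2|V(H_0)|-R(3/4,1/2,H_0)=\hrhat(H_0)$, together with the observation $\Hom(H,G_q)\ge 1$, spells out steps the paper leaves implicit.
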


\begin{proof}
For every $H\in\mathcal{B}_0$ and $q\in Q$, we have that $\inj(H, G_q)$ and $\Hom(H, G_q)$ are integers in the interval $\left[0, (q^2+q+1)^{|V(H)|}\right]$.
Since $q^2+q+1<q^3$ for $q\geq 2$, it follows that $\log_q\inj(H, G_q)$ and $\log_q\Hom(H, G_q)$ both lie in $\{-\infty\}\cup[0, 3|V(H)|]$. We therefore conclude that the sequence $\ler{\ler{\log_q\inj(H, G_q),\log_q\Hom(H, G_q)}}_{q\in Q}$ is a sequence in the space
\[X=\prod_{H\in\mathcal{B}_0} \left(\{-\infty\}\cup[0,3v(H)]\right)^2,\]
which is sequentially compact, thanks to being the product of countably many sequentially compact spaces.
Hence, there exists a subsequence of $\ler{\ler{\log_q\inj(H, G_q),\log_q\Hom(H, G_q)}}_{q\in Q}$ in which every coordinate converges.

If Theorem~\ref{thm:main} is false, then for some $H\in\mathcal{B}_0$, the sequence $\left(\log_q\Hom(H, G_q)\right)_{q\in Q}$ does not converge to $\lim_{q\rightarrow\infty}\log_q\Hom(H, R_q)$. This means that $\left(\log_q\Hom(H, G_q)\right)_{q\in Q}$ must have a different accumulation point. We select a sequence $\ler{q_i}_{i=1}^\infty$ of prime powers such that $\left(\log_{q_i}\Hom(H, G_{q_i})\right)_{i=1}^\infty$ tends to this accumulation point, then use the sequential compactness of $X$ to find a subsequence of $\ler{q_i}_{i=1}^\infty$ for which the corresponding points in $X$ converge.
\end{proof}

From this point on, we fix a subsequence $(q_i)_{i=1}^\infty$ of prime powers guaranteed by Lemma~\ref{lemma:subsequence}, and always assume only working within $\ler{G_{q_i}}_{i=1}^\infty$, even if we drop the index $i$ from $q_i$. This restriction is necessary: for example, $G_2$, which is the incidence graph of the Fano plane, is a subgraph of $G_q$ if and only if $q$ is a power of 2, meaning that the sequence $\left(\log_q\inj(G_2, G_q)\right)_{q\in Q}$ does not converge.

We are going to use the following notation:
\begin{align*}
\ihat(H)&:=\lim\limits_{q\rightarrow \infty}\log_q\inj(H,G_q),\\
\hhat(H)&:=\lim\limits_{q\rightarrow \infty}\log_q\Hom(H,G_q),\\
\irhat(H)&:=2|V(H)|-|E(H)|,\\
\hrhat(H)&:=\max_{H'\in\mathcal{C}(H)}\irhat(H').
\end{align*}

Given this notation, the reader might suspect to find an analogy between $\ihat$ and $\irhat$, and between $\hhat$ and $\hrhat$. Indeed, this relation is found upon considering the random graph $R_q$. A simple first-moment calculation reveals that
\[
\lim_{q\rightarrow\infty}\log_q\mathbb{E}\left[\inj(H,R_q)\right]=2|V(H)|-|E(H)|=\irhat(H).
\]
From there, classifying the homomorphisms from $H$ to $R_q$ by their image produces
\[\lim_{q\rightarrow\infty}\log_q\mathbb{E}\left[\Hom(H,R_q)\right]
=\lim_{q\rightarrow\infty}\log_q\sum_{H'\in\mathcal{C}(H)}\surj(H,H')\mathbb{E}\left[\inj(H',R_q)\right]
=\max_{H'\in\mathcal{C}(H)}\irhat(H')=\hrhat(H).\]

Szegedy~(\cite{Szegedy2015SparseGL}, see Theorem 3) proved a concentration result for the number of homomorphisms in the random bipartite graph $R(n, \beta, \alpha)$. His result implies that with probability 1, for every graph $H$, the number $\log_q\Hom(H,R_q)$ converges to $\hrhat(H)$.

The next proposition states that it is possible to compute the $\hhat$ of a graph from the $\ihat$ of its collapses, in the same way in which one can compute $\hrhat$ from $\irhat$.

\begin{proposition}\label{prop:max}
For every graph $H\in\mathcal{B}_0$, we have \[\hhat(H)=\max_{H'\in\mathcal{C}(H)}\ihat(H').\]
\end{proposition}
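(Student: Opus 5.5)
The plan is to classify homomorphisms by their image, exactly as in the first-moment computation for $R_q$ above. Every homomorphism $\varphi:H\to G_q$ factors uniquely as a vertex- and edge-surjective homomorphism onto its image $H'=\varphi(H)$, followed by an injective homomorphism $H'\to G_q$. Here $H'$ is the subgraph of $G_q$ with vertex set $\varphi(V(H))$ and edge set $\varphi(E(H))$, with the labelling of $V_1,V_2$ inherited. Up to isomorphism, $H'$ is a collapse of $H$. Identifying each image with a fixed representative of its isomorphism class, we obtain
\[
\Hom(H,G_q)=\sum_{H'\in\mathcal{C}(H)}\frac{\surj(H,H')\,\inj(H',G_q)}{|\mathrm{Aut}(H')|},
\]
where the division by the number of label-preserving automorphisms corrects for overcounting. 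If one prefers to avoid automorphisms, it is enough to use the two inequalities
\[
\max_{H'}\inj(H',G_q)\le \Hom(H,G_q)\le \sum_{H'}\surj(H,H')\inj(H',G_q).
\]
For the lower bound, note that $\surj(H,H')\ge 1$ for a collapse: precomposing a surjection $H\to H'$ with injections $H'\to G_q$ gives distinct homomorphisms.

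The essential finiteness point comes next. $\mathcal{C}(H)$ is a finite set, of size at most the number of partitions of $V(H)$ times the number of edge subsets. The coefficients $\surj(H,H')$ are constants independent of $q$. Hence
\[
\max_{H'\in\mathcal{C}(H)}\inj(H',G_q)\le \Hom(H,G_q)\le C_H\max_{H'\in\mathcal{C}(H)}\inj(H',G_q)
\]
with $C_H=\sum_{H'}\surj(H,H')$. Taking $\log_q$ and letting $q\to\infty$ along the fixed subsequence of Lemma~\ref{lemma:subsequence}, $\log_q C_H\to 0$. Since every $\log_q\inj(H',G_q)$ converges to $\ihat(H')$ in $\{-\infty\}\cup[0,\infty)$, and the maximum of finitely many convergent sequences converges to the maximum of the limits, both bounds converge to $\max_{H'}\ihat(H')$. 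Note that $H'\in\mathcal{B}_0$ because $H$ has an edge and $\varphi$ is edge-surjective. This gives $\hhat(H)=\max_{H'\in\mathcal{C}(H)}\ihat(H')$.

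The only delicate point is the value $-\infty$. We must make sure $\log_q$ of a sum behaves correctly when some terms vanish. The maximum is always at least $0$ when finite, and the maximum is never $-\infty$, because $\Hom(H,G_q)\ge 1$: $H$ is bipartite with labelled classes, so we can map all of $V_1(H)$ to a point and all of $V_2(H)$ to a line through it. So $\max_{H'}\inj(H',G_q)\ge 1$ for every $q$, and terms with value $0$ simply drop out of both bounds. I expect no real obstacle. The main care is in justifying the factorization through images so that every image is genuinely (isomorphic to) a collapse in $\mathcal{C}(H)$ with the labels respected.
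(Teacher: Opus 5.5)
Your proof is correct and follows the paper's argument. Like the paper, you factor each homomorphism through its image, sandwich $\Hom(H,G_q)$ between $\max_{H'}\inj(H',G_q)$ and $C_H\max_{H'}\inj(H',G_q)$, and take $\log_q$ along the fixed subsequence. Your division by $|\mathrm{Aut}(H')|$ actually corrects the paper's displayed identity, which overcounts by that factor; this does not affect the result, since only the two inequalities are used and both hold as you state them.
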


\begin{proof}
Let $\phi$ be a homomorphism from $H$ to $G_q$. Its image is a collapse $H'$ of $H$. In fact, $\phi$ can be expressed in a unique way as the composition of a surjective homomorphism from $H$ to $H'$ and an injective homomorphism from $H'$ to $G_q$. Recall that $\surj(H,H')$ denotes the number of label-preserving surjective homomorphisms from $H$ to $H'$. Then
\[\Hom(H,G_q)=\sum_{H'\in \mathcal{C}(H)}\surj(H,H')\inj(H',G_q).\]

Note that
\[
\max_{H'\in\mathcal{C}(H)}\ihat(H')
=\max_{H'\in\mathcal{C}(H)}\lim_{q\to\infty}\log_q\inj(H',G_q)
=\lim_{q\to\infty}\log_q\max_{H'\in\mathcal{C}(H)}\inj(H',G_q)
\]
by finiteness of $\mathcal{C}(H)$,
and thus
\begin{equation*}
    \begin{split}
        \log_q\Hom(H,G_q) & =\log_q\left(\sum_{H'\in \mathcal{C}(H)}\surj(H,H')\inj(H',G_q)\right) \\
        & \leq \log_q\left(\left(\sum_{H'\in \mathcal{C}(H)}\surj(H,H')\right) \max\limits_{H'\in \mathcal{C}(H)}\inj(H',G_q)\right) \\
        & = \log_q\left(\sum_{H'\in \mathcal{C}(H)}\surj(H,H')\right)+ \log_q\max_{H'\in\mathcal{C}(H)}\inj(H',G_q)\\
        & \rightarrow \max\limits_{H'\in \mathcal{C}(H)}\ihat(H') \quad \text{ as } q \rightarrow \infty,
    \end{split}
\end{equation*}
and 
\begin{equation*}
    \begin{split}
        \log_q\Hom(H,G_q) & =\log_q\left(\sum_{H'\in \mathcal{C}(H)}\surj(H,H')\inj(H',G_q)\right) \\
        & \geq \log_q \max\limits_{H'\in \mathcal{C}(H)}\inj(H',G_q) \\
        & \rightarrow \max\limits_{H'\in \mathcal{C}(H)}\ihat(H') \quad \text{ as } q \rightarrow \infty,
    \end{split}
\end{equation*}
giving us the desired identity.
\end{proof}
Throughout the paper we will use the following notations for graph operations.
\begin{defi}[Graph operations]
Let \(G = (V_1(G), V_2(G), E(G)) \in \mathcal{B}\) be a labelled bipartite graph. For any \(W\subseteq V_1(G) \cup V_2(G)\), let \(G-W\) denote the bipartite graph that we obtain by deleting the vertices of \(W\) and all edges incident to these vertices.

The \emph{wedge sum} of two graphs $H_1$ and $H_2$, at vertices $v_1\in V_i(H_1)$ and $v_2\in V_i(H_2)$, is obtained by taking disjoint copies of $H_1$ and $H_2$, and identifying the vertices $v_1$ and $v_2$. We denote a wedge sum by $H_1\vee H_2$.

The disjoint union of $H_1$ and $H_2$ connected by an edge at vertices $v_1\in V_i(H_1)$ and $v_2\in V_{3-i}(H_2)$ is, as its name indicates, the disjoint union of $H_1$ and $H_2$ together with an edge between $v_1$ and $v_2$.
\end{defi}

\begin{lemma}\label{lemma:homtransitivity}
Let $k$ be a non-negative integer and \(H\in \mathcal{B}_0\). Let us set
\[\inj_k(H,G_q):=\min_{v_1,...,v_k\in V(G_q)} \inj(H,G_q-\{v_1,...,v_k\}).\]
Then for every $H\in\mathcal{B}_0$ and any fixed non-negative $k$,
\[\ihat(H) =\lim_{q\rightarrow\infty}\log_q\inj_k(H,G_q).\]
\end{lemma}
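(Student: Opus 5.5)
The upper bound is immediate: taking $k=0$ (or noting that deleting vertices can only destroy injective homomorphisms) gives $\inj_k(H,G_q)\le \inj(H,G_q)$, so $\limsup_q \log_q\inj_k(H,G_q)\le \ihat(H)$. The substance is therefore a matching lower bound. The plan is to show that deleting any $k$ vertices destroys only a vanishing fraction of the injective homomorphisms. This will follow from the fact that $G_q$ is highly symmetric.

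The key step is a double-counting argument combined with vertex-transitivity within each class. The group $PGL(3,q)$ acts on $PG(2,q)$ by collineations. It acts transitively on points and transitively on lines, and it preserves incidence. Hence it acts by label-preserving automorphisms of $G_q$ that are transitive on $V_1(G_q)$ and on $V_2(G_q)$. For a vertex $v$ of $G_q$, let $N(v)$ be the number of injective homomorphisms $H\to G_q$ whose image contains $v$. Precomposing with an automorphism is a bijection on injective homomorphisms. So $N(v)$ depends only on the class of $v$.

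Now count pairs $(\varphi,u)$, where $\varphi$ is an injective homomorphism and $u\in V_1(H)$. Since $\varphi$ is injective, each such pair corresponds to exactly one point $\varphi(u)$ in the image. This gives
\[
\sum_{p\in V_1(G_q)}N(p)=|V_1(H)|\,\inj(H,G_q),
\]
and therefore
\[
N(p)=\frac{|V_1(H)|\,\inj(H,G_q)}{q^2+q+1}
\]
for every point $p$. Analogously, $N(\ell)=|V_2(H)|\,\inj(H,G_q)/(q^2+q+1)$ for every line $\ell$.

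Fix any vertices $v_1,\dots,v_k$. Every injective homomorphism into $G_q$ that is not an injective homomorphism into $G_q-\{v_1,\dots,v_k\}$ must hit some $v_j$. Hence
\[
\inj(H,G_q-\{v_1,\dots,v_k\})\ \ge\ \inj(H,G_q)\left(1-\frac{k|V(H)|}{q^2+q+1}\right).
\]
For $q$ large (with $k$ and $H$ fixed), the bracket is at least $1/2$. Taking the minimum over $v_1,\dots,v_k$ yields $\inj_k(H,G_q)\ge \tfrac12\inj(H,G_q)$. Applying $\log_q$, the additive error $\log_q 2$ tends to $0$, so $\liminf_q\log_q\inj_k(H,G_q)\ge\ihat(H)$. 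Combined with the upper bound, this gives the claim.

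The case $\ihat(H)=-\infty$ must be handled separately. If $\inj(H,G_q)=0$, then $\inj_k(H,G_q)=0$ as well, so both sides equal $-\infty$ along the same terms. Otherwise the inequality above applies directly, so the convention $\log_q0=-\infty$ causes no trouble.

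I expect the only point requiring care to be the transitivity claim. It must hold for all prime powers $q$, not just primes. This is standard, since $GL(3,\mathbb{F}_q)$ acts transitively on the nonzero vectors of $\mathbb{F}_q^3$, hence on the 1-dimensional subspaces, and dually on the 2-dimensional ones. With that in hand, everything else is routine.
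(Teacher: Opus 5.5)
Your proof is correct and follows essentially the same route as the paper: transitivity of the collineation group on points and on lines gives that each vertex of $G_q$ lies in the image of at most $|V(H)|\inj(H,G_q)/(q^2+q+1)$ injective copies of $H$, and a union bound over the $k$ deleted vertices gives $\inj_k(H,G_q)\ge(1-O(q^{-2}))\inj(H,G_q)$. Your double-counting derivation of $N(p)$ spells out the paper's one-line claim that a uniformly random injection sends a given vertex of $H$ to a given vertex of $G_q$ with probability $0$ or $1/(q^2+q+1)$.
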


\begin{proof}
We can assume that the left-hand side is not $-\infty$, because otherwise the lemma is trivial.

Let $w\in V(H)$, $v\in V(G_q)$. Then the probability that a uniformly randomly chosen injection $f$ takes $w$ to $v$ is $0$ or $1/(q^2+q+1)$ by transitivity of the projective planes $PG(2,q)$. Thus by the union bound, for any $A=\{v_1,...,v_k\}\subset V(G_q)$
\[
\prob(f(H)\cap A\not=\emptyset)\le \frac{|V(H)|k}{q^2+q+1}=O(q^{-2}).
\]
Hence
\begin{equation*}
    \begin{split}
        \log_q\inj_k(H,G_q)- \log_q\inj(H,G_q)=\log_q\frac{\inj_k(H,G_q)}{\inj(H,G_q)}=\log_q\left[\min_{|A|=k}\prob(f(H)\cap A=\emptyset)\right]\to 0
    \end{split}
\end{equation*}
as \(q\) tends to infinity, so we are done.
\end{proof}

\begin{lemma}\label{lemma:unions}
Let $H_1$ and $H_2$ be two labelled bipartite graphs. Let $H$ be obtained from $H_1$ and $H_2$ by taking either their disjoint union ($H_1\dot\cup H_2$), wedge sum ($H_1\vee H_2$) or the disjoint union connected by an edge. Then $\ihat(H)$ and $\irhat(H)$
are as in Table~\ref{table:tableofunions}, independently of which two vertices are identified by the wedge or connected by the edge.
\renewcommand{\arraystretch}{2}
\begin{table}[!ht]
\begin{center}
\begin{tabular}{|c|c|c|c|}
\hline
\multicolumn{1}{|l|}{} & \multicolumn{1}{c|}{$H=H_1\dot\cup H_2$} & \multicolumn{1}{c|}{$H=H_1\vee H_2$}                                         & \multicolumn{1}{c|}{disjoint union connected by an edge}                                    \\ \hline
$\ihat(H)$ & $\ihat(H_1)+\ihat(H_2)$ & $\ihat(H_1)+ \ihat(H_2)-2$ & $\ihat(H_1)+ \ihat(H_2)-1$  \\ \hline
$\irhat(H)$ & $\irhat(H_1)+\irhat(H_2)$ & $\irhat(H_1)+ \irhat(H_2)-2$ & $\irhat(H_1)+ \irhat(H_2)-1$ \\ \hline
\end{tabular}
\end{center}
\caption{Results of Lemma~\ref{lemma:unions}}
\label{table:tableofunions}
\end{table}
\renewcommand{\arraystretch}{1}
\end{lemma}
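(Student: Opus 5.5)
The $\irhat$ row is immediate from $\irhat(H)=2|V(H)|-|E(H)|$. A disjoint union adds vertex and edge counts. A wedge sum loses one vertex, so $\irhat$ drops by $2$. Connecting by an edge adds one edge, so $\irhat$ drops by $1$. For $\ihat$ I will obtain matching upper and lower bounds on $\inj(H,G_q)$ up to constant factors (or $q^{o(1)}$ factors) and then take $\log_q$. Throughout I assume $\ihat(H_1),\ihat(H_2)>-\infty$. Otherwise $\inj(H_i,G_q)=0$ eventually and $H$ contains $H_i$, so both sides are $-\infty$. If instead $\inj(H_i,G_q)$ is positive but the limit is $-\infty$, the same inequalities below still give $-\infty$ on both sides.

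\emph{Disjoint union.} An injection of $H_1\dot\cup H_2$ consists of an injection $f_1$ of $H_1$ together with an injection of $H_2$ into $G_q-f_1(V(H_1))$. This gives
\[\inj(H_1,G_q)\,\inj_{k}(H_2,G_q)\le\inj(H,G_q)\le\inj(H_1,G_q)\,\inj(H_2,G_q)\]
with $k=|V(H_1)|$, and Lemma~\ref{lemma:homtransitivity} finishes this case.

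\emph{Wedge sum.} I use the flag-transitivity of $PG(2,q)$, in particular the vertex-transitivity within each class of $G_q$. For $v\in V_i(G_q)$, the number of injections of $H_j$ sending the wedge vertex $v_j$ to $v$ is $\inj(H_j,G_q)/(q^2+q+1)$, independently of $v$. The upper bound follows by summing over $v\in V_i(G_q)$ the product of the two rooted counts:
\[\inj(H,G_q)\le (q^2+q+1)\cdot\frac{\inj(H_1,G_q)}{q^2+q+1}\cdot\frac{\inj(H_2,G_q)}{q^2+q+1}.\]
For the lower bound I need disjointness away from the root, and this is the step I expect to be the main obstacle. Fix the image of $H_1$. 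The extensions of $H_2$ that avoid the other $|V(H_1)|-1$ used vertices must still number a constant fraction of the rooted count. I would prove a rooted version of Lemma~\ref{lemma:homtransitivity}, using the stabilizer of $v$ in $P\Gamma L(3,q)$. This stabilizer acts transitively on the vertices at each fixed distance from $v$ in $G_q$, since it is transitive on the points and lines incident or non-incident to a given point or line. For a uniformly random rooted injection, a non-root vertex $w$ of $H_2$ is sent to a given vertex $u$ with probability at most $1/|\text{orbit of }u|$. Each orbit at distance $\ge 1$ has size at least $q+1$, so the probability that the image hits one of the forbidden vertices is $O(1/q)$. This is enough, since we only need $\log_q$ of the ratio to tend to $0$. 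Taking logs then gives $\ihat(H_1)+\ihat(H_2)-2$.

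\emph{Connecting edge.} Let $v_1\in V_i(H_1)$ and $v_2\in V_{3-i}(H_2)$. Here I use flag-transitivity: the number of injections of $H_1\dot\cup H_2$ sending $(v_1,v_2)$ to a prescribed pair depends only on whether that pair is incident or not. I count injections of the edge-joined $H$ as injections of $H_1$ together with injections of $H_2$ whose root lands on one of the $q+1$ neighbours of $f_1(v_1)$. Averaging via flag-transitivity gives
\[\inj(H,G_q)\approx(q^2+q+1)\cdot\frac{\inj(H_1)}{q^2+q+1}\cdot(q+1)\cdot\frac{\inj(H_2)}{q^2+q+1}=\Theta\bigl(q^{-1}\inj(H_1)\inj(H_2)\bigr).\]
The disjointness correction is handled exactly as in the wedge case, via the stabilizer of the flag and the $O(1/q)$ collision bound. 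This gives $\ihat(H_1)+\ihat(H_2)-1$, and the value does not depend on which vertices are joined.
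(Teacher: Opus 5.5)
Your treatment of the $\irhat$ row, the disjoint union and the wedge sum is essentially the paper's. The disjoint union uses the same sandwich with $\inj_{|V(H_1)|}$ and Lemma~\ref{lemma:homtransitivity}. The wedge uses the same upper bound from transitivity on each class, and the same lower bound: the stabilizer of the root has orbits of size at least $q+1$ on the other vertices, so each possible collision has probability $O(1/q)$.

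The real difference is the edge-connected case. The paper does no new counting there. It writes the graph as $(H_1\vee K_2)\vee H_2$ and applies the wedge row twice with $\ihat(K_2)=3$, getting $\ihat(H_1)+1+\ihat(H_2)-2$. Your direct count also works, but it needs a second collision argument, and there you have named the wrong group.

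\emph{The flag stabilizer does not work.} The stabilizer of the flag $(f_1(v_1),u)$ fixes $f_1(v_1)$. So its orbit argument gives no control over a non-root vertex of $H_2$ being sent to $f_1(v_1)$.

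\emph{The fix.} Use the stabilizer of $u$ alone, exactly as in your wedge case. Under it, $f_1(v_1)$ lies in the orbit formed by the $q+1$ neighbours of $u$, so that collision has probability at most $1/(q+1)$. You must also discard the at most $|V(H_1)|$ neighbours $u$ of $f_1(v_1)$ that already lie in $f_1(V(H_1))$. Your sketch does not mention this, but it costs only a factor $1-O(1/q)$.

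With both points made explicit you get $\inj(H,G_q)=(1-O(q^{-1}))\,\frac{(q+1)\inj(H_1,G_q)\inj(H_2,G_q)}{q^2+q+1}$, and hence the entry $\ihat(H_1)+\ihat(H_2)-1$. This count uses only transitivity on each vertex class; flag-transitivity is never actually needed.

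The paper's reduction is shorter because it reuses a single symmetry argument. Your route is a bit longer but equally elementary.

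A minor point: your aside about $\inj(H_i,G_q)$ being positive while the limit is $-\infty$ is vacuous, since $\inj\geq 1$ forces $\log_q\inj\geq 0$.
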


\begin{proof}
For $\irhat$, the result is an easy computation. 

For $\ihat$, the first column comes from Lemma \ref{lemma:homtransitivity}, because
\[
\inj(H_1, G_q)\inj_{|V(H_1)|}(H_2, G_q)\leq \inj(H_1\dot\cup H_2, G_q)\leq \inj(H_1, G_q)\inj(H_2, G_q).
\]

For the second column, we have
$\inj(H_1\vee H_2, G_q)\leq\frac{\inj(H_1, G_q)\inj(H_2, G_q)}{q^2+q+1}$ by transitivity of $G_q$. For the lower bound we need an argument similar to Lemma~\ref{lemma:homtransitivity}. 

We can assume that $\ihat(H_1), \ihat(H_2)\neq-\infty$, as otherwise the statement is trivial. Up to a change in the indices, we can suppose that $v_1\in V_1(H_1)$. Fix a copy of $H_1$ in $G_q$, and consider the point $p$ which is the image of $v_1$. Consider the subgroup of automorphisms of $PG(2,q)$ which fixes $p$. This group is transitive on the set of points distinct from $p$, transitive on the set of lines containing $p$, and transitive on the set of lines not containing $p$. Next consider a copy of $H_2$ in $G_q$ for which the image of $v_2$ is $p$. By transitivity of $G_q$, there are exactly $\frac{\inj(H_2, G_q)}{q^2+q+1}$ such copies. Now consider the image of $H_2$ under the automorphism group described above. Given a vertex $w_1\in V(H_1)\setminus\{v_1\}$ and a vertex $w_2\in V(H_2)\setminus\{v_2\}$, the proportion of homomorphisms that send $w_1$ to $w_2$ is at most $\frac{1}{q+1}$. This means that the proportion of automorphisms of the copy of $H_2$ that intersect non-trivially the fixed copy of $H_1$ is at most $O(q^{-1})$, and so $\inj(H_1\vee H_2, G_q)\geq(1-O(q^{-1}))\frac{\inj(H_1, G_q)\inj(H_2, G_q)}{q^2+q+1}$.

Finally, the third column can be obtained from the second, because the disjoint union of $H_1$ and $H_2$ joined by an edge can be written as $(H_1\vee K_2)\vee H_2$.
\end{proof}

\begin{corollary}\label{cor:remove}
Let $H$ be a labelled bipartite graph, and let $v\in V(H)$.
\begin{itemize}
    \item If $d(v)=0$, then $\ihat(H-v)=\ihat(H)-2$.
    \item If $d(v)=1$, then $\ihat(H-v)=\ihat(H)-1$.
    \item If $d(v)\geq2$, then $\ihat(H-v)\geq\ihat(H)$.
\end{itemize}
\end{corollary}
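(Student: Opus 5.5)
Note first the sign convention: the first two items really say $\ihat(H)=\ihat(H-v)+2$ and $\ihat(H)=\ihat(H-v)+1$. I would derive all three items from Lemma~\ref{lemma:unions} and one direct counting bound.

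\textbf{Degree $0$.} Here $H=(H-v)\dot\cup K$, where $K$ is the single vertex $v$ with its label. The table's first column applies equally when one part has no edges, since its proof uses only Lemma~\ref{lemma:homtransitivity} and trivial bounds. A single vertex has $\ihat=2$, because $\inj=q^2+q+1$. This gives $\ihat(H)=\ihat(H-v)+2$.

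\textbf{Degree $1$.} Let $u$ be the unique neighbour of $v$. Then $H$ is the wedge sum of $H-v$ and a copy of $K_2$, glued at $u$. Since $\ihat(K_2)=\log_q\big((q^2+q+1)(q+1)\big)\to 3$, the second column gives
\[
\ihat(H)=\ihat(H-v)+3-2=\ihat(H-v)+1 .
\]
If $H-v$ has no edges, the wedge argument still goes through verbatim; alternatively one can count directly, using that every vertex of $G_q$ has degree $q+1$.

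\textbf{Degree $\ge 2$.} Here I would not use the table, but bound injective copies directly. Take any injective copy of $H-v$ in $G_q$, and choose two distinct neighbours $u_1,u_2$ of $v$. Their images are two distinct points, or two distinct lines. Two distinct points lie on exactly one common line, and two distinct lines meet in exactly one common point. So the image of $v$ has at most one possible choice. Each injective copy of $H$ restricts to an injective copy of $H-v$, hence
\[
\inj(H,G_q)\le \inj(H-v,G_q).
\]
Taking $\log_q$ and passing to the limit along our fixed subsequence gives $\ihat(H)\le\ihat(H-v)$. This also covers the case $\ihat(H)=-\infty$.

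\textbf{Main obstacle.} The only delicate point is the degenerate situation where $H-v$ (or $K_2$) has no edge or $H-v$ is empty, so that $H-v$ lies outside $\mathcal{B}_0$ and Lemma~\ref{lemma:unions} is not literally stated for it. I would handle this by noting that the proofs of Lemmas~\ref{lemma:homtransitivity} and~\ref{lemma:unions} use only vertex counts and transitivity, never edges. If $H-v$ is empty, one checks directly that $\ihat(H)=2$ in the isolated case and $3$ for $K_2$.
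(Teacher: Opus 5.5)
Your proposal is correct and is essentially the paper's proof: the degree-$0$ case via the disjoint union with a single vertex ($\ihat(K_1)=2$), the degree-$1$ case via Lemma~\ref{lemma:unions}, and the degree-$\ge 2$ case via the fact that two distinct vertices of $G_q$ have at most one common neighbour, so each injective copy of $H-v$ extends in at most one way. The only difference is cosmetic: for degree $1$ you use the wedge column with $K_2$ rather than the ``connected by an edge'' column with $K_1$, but the paper derives the latter from the former in exactly this way, and your extra care about edgeless degenerate cases is a harmless tightening.
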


\begin{proof}
An isolated vertex has $\ihat(K_1)=2$. If $d(v)=0$, then $H$ is the disjoint union of $H-v$ and $K_1$. If $d(v)=1$, then $H$ is the union of $H-v$ and $K_1$ connected by an edge and hence $\ihat(H)=\ihat(H-v)+\ihat(K_1)-1$.

If $d(v)\geq 2$, then the fact that any pair of distinct vertices in $G_q$ has at most one common neighbour implies that any injective homomorphism from $H-v$ to $G_q$ extends in at most one way to $H$, so \[\ihat(H)=\lim\limits_{q\rightarrow\infty}\log_q\inj(H,G_q)\leq \lim\limits_{q\rightarrow\infty}\log_q\inj(H-v,G_q)=\ihat(H-v).\qedhere\]
\end{proof}

Finally, we will need the following lemma. It seems out of place right now, but it will be used in the final steps of the proof of Theorem \ref{thm:main}, in which 2-connectivity is important.

\begin{lemma}\label{lemma:removeedge}
   Let $G$ be a $2$-connected graph with $\delta(G)\geq3$. Then there is an edge $e\in E(G)$ such that the graph $G-\{e\}$ obtained by removing $e$ from $G$ is still $2$-connected.
\end{lemma}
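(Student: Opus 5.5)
We will use the ear decomposition (Whitney's theorem): a graph is $2$-connected if and only if it has an ear decomposition $G = C \cup P_1 \cup \dots \cup P_m$. Here $C$ is a cycle, and each $P_j$ is a path whose two distinct endpoints lie in $C\cup P_1\cup\dots\cup P_{j-1}$ and whose interior vertices are new. Since $\delta(G)\ge 3$, $G$ is not a cycle, so $m\ge 1$.

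Consider the last ear $P_m$. If $P_m$ is a single edge $e$ (its interior is empty), then $C\cup P_1\cup\dots\cup P_{m-1}$ is an ear decomposition of $G-\{e\}$ on the same vertex set. Hence $G-\{e\}$ is $2$-connected and we are done. So the whole task is to pick an ear decomposition whose last ear is a single edge.

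Assume instead that every ear decomposition ends with a path having at least one interior vertex. The interior vertices of the last ear $P_m$ have degree exactly $2$ in $G$: they are new when $P_m$ is added, and no later ear exists to attach to them. This contradicts $\delta(G)\ge3$. The point is that, in any ear decomposition, interior vertices of the final ear have degree $2$ in the final graph. So every ear decomposition of a graph with $\delta\ge3$ already ends with a trivial ear, and deleting that edge works.

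The only step needing care is that ear decompositions exist with each ear's endpoints distinct and already present. This is the standard Whitney theorem, which we cite. The converse direction (a graph admitting such a decomposition is $2$-connected) is also standard: adding a path between two distinct existing vertices preserves $2$-connectivity. For completeness we can verify this directly. Removing any single vertex either leaves the old $2$-connected part connected with the new path attached through a surviving endpoint, or splits the new path into pieces each still attached to an endpoint. I expect no genuine obstacle here; the main subtlety is remembering that a single-edge ear is allowed and contributes no vertices.
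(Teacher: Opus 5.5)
Your proof is correct, but it takes a different route from the paper. The paper invokes the theorem of Chartrand, Kaugars and Lick that a critically $2$-connected graph has a vertex of degree at most $2$. Since $\delta(G)\geq 3$, this yields a vertex $v$ with $G-v$ still $2$-connected. The paper then views $G-\{u_1v\}$ as $G-v$ plus a new vertex joined to the remaining $\deg(v)-1\geq 2$ neighbours, which preserves $2$-connectivity.

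You instead use Whitney's ear decomposition. Interior vertices of the final ear have degree exactly $2$ in $G$, so under $\delta(G)\geq 3$ the final ear is a single edge $e$. Discarding it leaves an ear decomposition of $G-\{e\}$ on the full vertex set.

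Your argument is more elementary: Whitney's theorem is textbook material, whereas the cited result on critically $n$-connected graphs is considerably deeper. It also works directly with edge deletion rather than passing through vertex deletion. It is constructive, since the last ear of \emph{any} ear decomposition works; this is essentially the standard argument that a minimally $2$-connected graph has a vertex of degree $2$.

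The paper's approach gives the extra information that every edge at the vertex $v$ is removable. Its vertex-to-edge reduction is also not tied to ears, so it would carry over to higher connectivity given a suitable degree bound for critically $k$-connected graphs.

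The contradiction framing in your third paragraph is unnecessary, since the degree observation applies directly to whichever ear decomposition you fix, but it does no harm.
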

\begin{proof}
   By the main theorem of \cite{chartrand1972critically}, there is a vertex $v\in V(G)$ such that $G-v$ is still 2-connected. Suppose $u_1,\dots,u_{\deg(v)}\in V(G)$ were the neighbours of $v$. Then adding a vertex $w$ to $G-v$ and connecting it to $u_2,\dots,u_{\deg(v)}$ is adding a vertex of degree at least 2 to a 2-connected graph, so the resulting graph $G-v+w$ is still 2-connected. But $G-v+w$ is isomorphic to $G-\{u_1v\}$, so $u_1v$ is an edge whose removal preserves 2-connectivity.
\end{proof}

Before starting with the proof of our main theorem, let us rephrase it using the notation that we have introduced. The following statement is, using Lemma~\ref{lemma:subsequence}, equivalent to Theorem~\ref{thm:main}.

\begin{theorem}\label{thm:quasirandom}
For every bipartite graph $H$ we have $\hhat(H)=\hrhat(H)$.
\end{theorem}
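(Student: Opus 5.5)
By Proposition~\ref{prop:max} and the definition of $\hrhat$, Theorem~\ref{thm:quasirandom} is equivalent to
\[
\max_{H'\in\mathcal{C}(H)}\ihat(H')=\max_{H'\in\mathcal{C}(H)}\irhat(H').
\]
The two maxima need not be attained at the same collapse. For example, $\ihat(K_{2,2})=-\infty<\irhat(K_{2,2})$, while for the Pappus configuration $\ihat$ exceeds $\irhat$. I would therefore prove two inequalities separately, both by induction on $|V(H)|+|E(H)|$:
\begin{itemize}
\item[(U)] $\ihat(H)\le\hrhat(H)$ for every $H$;
\item[(L)] if $H^*$ is a collapse of $H$ maximizing $\irhat$, with $|V(H^*)|$ minimal among maximizers, then $\ihat(H^*)\ge\irhat(H^*)$.
\end{itemize}
Inequality (U) applied to each collapse $H'$ gives $\hhat\le\hrhat$, since $\hrhat(H')\le\hrhat(H)$ because collapses of collapses are collapses. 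Inequality (L) gives $\hhat\ge\hrhat$. Lemma~\ref{lemma:unions} reduces both statements to connected graphs, and then to $2$-connected blocks via wedge sums, because $\irhat$ and $\ihat$ transform identically under the three operations. The corresponding compatibility of $\hrhat$ with wedge sums I would check directly: a collapse of a wedge sum contains a collapse of each part glued at a vertex.

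\textbf{Reducing the degrees.} Vertices of degree $\le1$ are removed using Corollary~\ref{cor:remove}. Correspondingly, $\hrhat(H)\ge\hrhat(H-v)+1$ for a leaf (reattach a pendant edge to an optimal collapse), and $\hrhat(H)\ge\hrhat(H-v)+2$ for an isolated vertex. This disposes of both (U) and (L) whenever $\delta(H)\le1$.

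For a vertex $v$ of degree $2$ with neighbours $u,w$, an injective copy of $H-v$ extends to a copy of $H$ in at most one way, since $u,w$ have at most one common neighbour in $G_q$. The extension exists unless that common neighbour already lies in the image. For the lower bound I would use Lemma~\ref{lemma:homtransitivity}-style counting to show that, generically, the extension exists, so that $\ihat(H)=\ihat(H-v)$. This matches $\irhat(H)=\irhat(H-v)$.

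\textbf{The case $\delta\ge3$, $2$-connected.} This is where I expect the main difficulty.

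For (U), I would order $V(H)$ so that as many vertices as possible have at least $2$ earlier neighbours. Each such vertex is then determined by its predecessors. This gives $\ihat(H)\le 2a+b$, where $a$ counts vertices with no earlier neighbour and $b$ counts those with exactly one. I would then exhibit a collapse whose $\irhat$ is at least $2a+b$. The natural candidates are star-like collapses (merging a whole class, giving value $|V_i|+2$) and collapses that merge the determined vertices into tree-like skeletons. Lemma~\ref{lemma:removeedge} enters here: deleting an edge keeps $2$-connectivity and only increases $\ihat$, while it shifts $\irhat$ by $+1$. Iterating this reduces to the degree-$2$ case, but only if I can show that the inequality loses at most the corresponding $1$ each time. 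This bookkeeping is the step I am least sure of.

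For (L), minimality and maximality of $H^*$ say that merging any two same-class vertices with $c$ common neighbours changes $\irhat$ by $c-2<0$, and similar sparsity conditions hold for subgraphs. From these I would aim to derive that $H^*$ admits an ordering in which each vertex has at most $2$ earlier neighbours. A greedy generic embedding into $PG(2,q)$ along this ordering then produces about $q^{2|V|-|E|}$ injective copies: $q^2$ choices for a vertex with no earlier neighbour, $q$ choices for one earlier neighbour, and $1$ choice for two. This gives $\ihat(H^*)\ge\irhat(H^*)$.

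The delicate point is collisions: a vertex determined by two earlier neighbours might coincide with an earlier vertex, or create unwanted incidences. I would try to bound these as a $O(q^{-1})$ fraction using the transitivity arguments of Lemma~\ref{lemma:homtransitivity} and Lemma~\ref{lemma:unions}.
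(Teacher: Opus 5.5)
Your split into (U) and (L) is logically sound. The reductions you list are fine for (U): components, cut vertices, $\delta\le 1$, and $\ihat(H)\le\ihat(H-v)\le\hrhat(H-v)\le\hrhat(H)$ at a degree-$2$ vertex. But the case you flag is the whole difficulty, and your plan for it does not close. Deleting an edge and inducting only yields $\ihat(H)\le\ihat(H-e)\le\hrhat(H-e)\le\hrhat(H)+1$, so you lose a unit per deleted edge, and nothing in your sketch pays it back.

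The missing idea is to stop comparing with subgraphs and prove an absolute bound: $\ihat(H)\le |V(H)|/2+1$ for $2$-connected $H$ with $\delta(H)\ge 3$. This already loses to the single collapse $K_{1,\lceil |V(H)|/2\rceil}$, whose $\irhat$ is $\lceil |V(H)|/2\rceil+2$. That bound is not inductive by itself: deleting an edge creates degree-$2$ vertices, and peeling low-degree vertices destroys $2$-connectivity. So the paper proves the stronger pair $\ihat(H)\le\frac{|V(H)|+\max\{v_2(H),2\}}{2}$ and $\ihat(H)\le\frac{|V(H)|+\max\{\ell(H),2\}}{2}$ by a joint induction through the block--cut tree (Lemma~\ref{lemma:game}, Propositions~\ref{prop:bridges} and~\ref{prop:finaltree}).

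Your ordering bound $\ihat(H)\le 2a+b$ is compatible with this. The identities of Lemma~\ref{lemma:unions} hold exactly for $\min(2a+b)$ over orderings, so that induction could be run on it. But exhibiting an ordering with $2a+b\le |V(H)|/2+1$ is precisely what has to be proved, and you give no argument for it.

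Your lower-bound mechanism is also wrong as stated. At a degree-$2$ vertex, $\ihat(H)=\ihat(H-v)$ fails in general. For $H=C_4$, $H-v$ is a path with $\ihat=4$, yet the common neighbour of the images of $u,w$ is always the image of the fourth vertex, so $\ihat(C_4)=-\infty$. A vertex determined by two earlier ones is not randomised by the automorphism group. Whether it collides with an earlier image is a question about $\ihat$ of a proper collapse, not something Lemma~\ref{lemma:homtransitivity} or Lemma~\ref{lemma:unions} can bound. Your greedy embedding for $\delta\ge 3$ is moot anyway: the same star collapse shows no graph with $\delta\ge 3$ is $R$-critical, and your $H^*$ is $R$-critical.

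The repair is to prove (U) first for all graphs and derive (L) from it, as in the paper's Case~2.2.2. Suppose $H^*$ is $R$-critical and $d(v)=2$. Then $H^*-v$ is $R$-critical: a non-decreasing proper collapse of it extends by a vertex of degree $1$ or $2$ to one of $H^*$. Induction then gives $\hhat(H^*)\ge\hhat(H^*-v)\ge\ihat(H^*-v)\ge\irhat(H^*-v)=\irhat(H^*)$. Every proper collapse $H'$ has $\ihat(H')\le\hrhat(H')<\irhat(H^*)$ by (U) and $R$-criticality. So by Proposition~\ref{prop:max} the maximum is attained at $H^*$ itself, giving $\ihat(H^*)\ge\irhat(H^*)$.
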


If instead of homomorphisms, we only count injective homomorphisms, we can see that the graphs $G_q$ and $R_q$ behave differently. An obvious example is $C_4$: in a projective plane, every pair of lines intersects in exactly one point, so $\ihat(C_4)=-\infty$, while we have $\irhat(C_4)=4$. On the other hand, there are examples in which $\ihat(H)>\irhat(H)$, i.e., the graph appears more often in $G_q$ than in $R_q$. Such examples include the incidence graphs of the point-line arrangements corresponding to Pappus's theorem (denoted by $\bar P$) and Desargues's theorem (denoted by $\bar D$). These graphs satisfy $\irhat(\bar P)=9$, $\ihat(\bar P)=10$, $\irhat(\bar D)=10$ and $\ihat(\bar D)=11$.

\begin{figure}[h]
\begin{centering}
\includegraphics[width=0.8\textwidth]{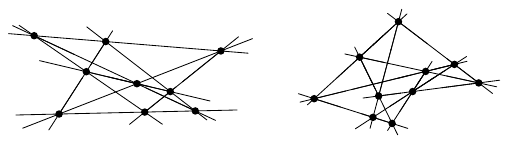}
\caption{Point-line arrangements corresponding to Pappus's theorem (left) and Desargues's theorem (right).}
\par\end{centering}
\end{figure}

The reason why these differences between $G_q$ and $R_q$ do not immediately disprove Theorem~\ref{thm:main} is that a significant proportion of the homomorphisms from the graphs described above to $G_q$ and $R_q$ are not injective. By Proposition~\ref{prop:max} and the definition of $\hrhat$, in order to determine the values of $\hhat$ and $\hrhat$ of a graph $H$, one needs to consider the values of $\ihat$ and $\irhat$ not just for $H$ itself, but also for its collapses. In these cases, there are collapses for which the values of $\ihat$ and $\irhat$ are greater than or equal to those of $H$. Indeed, consider the star $K_{1,t}$ which has $\ihat(K_{1,t})=\irhat(K_{1,t})=t+2$. Observe that $K_{1,2}$ is a collapse of $C_4$,  $K_{1,9}$ is a collapse of $\bar P$, and $K_{1,10}$ is a collapse of $\bar D$, each obtained by mapping all vertices of one side of the bipartition to the center of the star.

\section{Proof of Theorem~\ref{thm:quasirandom}}

We mentioned in the previous paragraph how, for some graphs $H$, most homomorphisms from $H$ to $G_q$ and $R_q$ are not injective. We will establish a definition for the opposite case, that is, when almost all homomorphisms are injective. This definition will play a crucial role in the proof of our main theorem.

\begin{defi}
A graph $H$ is \emph{$P$-critical} if $\ihat(H)>\ihat(H')$ for all proper collapses $H'$ of $H$. Analogously, $H$ is \emph{$R$-critical} if $\irhat(H)>\irhat(H')$ for all proper collapses $H'$ of $H$.
\end{defi}

\begin{lemma}\label{lemma:dominantcollapse}
    Every graph $H$ has a collapse $H'\in\mathcal{C}(H)$ such that $\ihat(H')\geq\ihat(H)$ and $H'$ is $P$-critical.
    Similarly, it has a collapse $H''$ such that $\irhat(H'')\geq\irhat(H)$ and $H''$ is $R$-critical.
\end{lemma}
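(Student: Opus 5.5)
We argue by a maximality (or descent) argument on the finite set $\mathcal{C}(H)$. The facts we need are that collapsing is transitive and reflexive: if $H'$ is a collapse of $H$ and $H''$ is a collapse of $H'$, then composing the two vertex- and edge-surjective homomorphisms gives a vertex- and edge-surjective homomorphism $H\to H''$, so $H''\in\mathcal{C}(H)$. A proper collapse strictly decreases the number of vertices, or the number of edges, since it is surjective but not an isomorphism. Hence $\mathcal{C}(H)$ is finite up to isomorphism, and so is every $\mathcal{C}(H')\subseteq\mathcal{C}(H)$. We also use that $\ihat$ and $\irhat$ are isomorphism invariants, and that $\ihat$ takes values in $\{-\infty\}\cup[0,\infty)$ along the fixed subsequence.

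For the $P$-critical statement, consider the set $S=\{H'\in\mathcal{C}(H):\ihat(H')\ge\ihat(H)\}$. It contains $H$ itself, since $H$ is a collapse of itself via the identity. Choose $H'\in S$ with $|V(H')|+|E(H')|$ minimal.

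We claim $H'$ is $P$-critical. Suppose some proper collapse $H''$ of $H'$ had $\ihat(H'')\ge\ihat(H')$. Then $H''\in\mathcal{C}(H)$ by transitivity, and $\ihat(H'')\ge\ihat(H')\ge\ihat(H)$, so $H''\in S$. But $|V(H'')|+|E(H'')|<|V(H')|+|E(H')|$, because a proper collapse is surjective on vertices and edges and is not bijective on both. This contradicts minimality.

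The one subtle point is the case $\ihat(H)=-\infty$. Then $S=\mathcal{C}(H)$, and the minimal element is a single edge or vertex. The strict inequality $\ihat(H')>\ihat(H'')$ must be read in $\{-\infty\}\cup\mathbb{R}$, and the argument above still goes through unchanged. Note also that a graph with no proper collapses (e.g.\ $K_2$) is vacuously critical.

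The $R$-critical statement follows verbatim with $\irhat$ in place of $\ihat$. Alternatively, one can choose $H''$ maximizing $\irhat$ over $\mathcal{C}(H)$, breaking ties by minimal size. I expect no real obstacle. The only care needed is to verify that proper collapses strictly reduce $|V|+|E|$: if $|V|$ and $|E|$ were both preserved, the surjections would be bijections, and a bijective, edge-bijective, label-preserving homomorphism is an isomorphism.
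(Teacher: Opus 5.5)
Your proof is correct and is essentially the paper's argument. The paper repeatedly replaces a non-$P$-critical graph by a proper collapse with at least as large $\ihat$, and uses transitivity of collapsing plus the strict drop in $|V|$ to guarantee termination. You package the same well-foundedness as choosing a size-minimal element of $\{H'\in\mathcal{C}(H):\ihat(H')\geq\ihat(H)\}$.

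Your measure $|V|+|E|$ works equally well. In fact a proper collapse always strictly reduces $|V|$, since a bijection on vertices is automatically injective on edges.
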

\begin{proof}
    If $H$ itself is $P$-critical, we are done.
    Otherwise there is a proper collapse $H'$ of $H$ with $\ihat(H')\geq\ihat(H)$. Since the relation of being a collapse is transitive and we have $1\leq|V(H')|<|V(H)|$, we can continue the process with $H'$ and must stop after finitely many steps.
    The proof of the analogous statement for $R$-criticality is the same.
\end{proof}

The following proposition, which will be the main part of our proof, implies Theorem~\ref{thm:quasirandom}:

\begin{proposition}\label{prop:critical}
Suppose that a graph $H\in\mathcal{B}_0$ satisfies $\ihat(H)\neq\irhat(H)$. Then $H$ is not $P$-critical nor $R$-critical.
\end{proposition}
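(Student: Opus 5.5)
We argue by strong induction on $|V(H)|+|E(H)|$, proving simultaneously that $H$ is neither $P$-critical nor $R$-critical. By the definitions, "not $P$-critical" just means there is a proper collapse $H'$ with $\ihat(H')\ge\ihat(H)$, and similarly for $R$. So for each $H$ we aim to exhibit explicit proper collapses that are at least as good for $\ihat$, and separately for $\irhat$. The plan is to reduce to a highly connected core using the additive rules of Lemma~\ref{lemma:unions} and Corollary~\ref{cor:remove}, and then handle the core directly.

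\textbf{Step 1: reductions.} Suppose $H$ is disconnected, has a cut vertex, or has a vertex of degree at most $1$. Then $H$ is a disjoint union, a wedge sum, or a disjoint union connected by an edge of smaller pieces $H_1,H_2$. By Table~\ref{table:tableofunions}, $\ihat$ and $\irhat$ of $H$ are given by the same affine formula in the values of the pieces. Hence $\ihat(H)\neq\irhat(H)$ forces $\ihat(H_j)\neq\irhat(H_j)$ for some $j$, and by induction $H_j$ has a proper collapse $H_j'$ that is at least as good for $\ihat$, and one at least as good for $\irhat$. Replacing $H_j$ by $H_j'$ inside $H$ (keeping the gluing vertex or connecting edge) gives a proper collapse of $H$. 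Since Lemma~\ref{lemma:unions} holds \emph{independently of which vertices are glued}, this collapse satisfies the same inequality. One caveat needs checking: if the collapse $H_j'$ merges the gluing vertex with another vertex, the result is still a union of the same type, so the formula still applies.

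\textbf{Step 2: degree-$2$ vertices.} Assume $H$ is $2$-connected with a vertex $v$ of degree $2$, with neighbours $u,w$ (both in the same class). I would compare $H$ with the collapse obtained by merging $u$ and $w$ (which also merges the two edges at $v$). This removes one vertex and one edge, so $\irhat$ increases by $1$. For $\ihat$, the image of $v$ must be the unique common neighbour of the images of $u$ and $w$, and merging them should not decrease the count by Corollary~\ref{cor:remove}-type arguments. If this comparison fails, the fallback is to use $H-v$ and induction.

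\textbf{Step 3: the core case and main obstacle.} We are left with $H$ $2$-connected with $\delta(H)\ge3$; I expect this to be the hardest step. Here $\irhat(H)=2|V(H)|-|E(H)|\le |V(H)|/2$. The natural competitors are the two star collapses $K_{1,|V_2(H)|}$ and $K_{|V_1(H)|,1}$, whose $\ihat$ and $\irhat$ values both equal $\max(|V_1|,|V_2|)+2\ge |V(H)|/2+2$. This already beats $\irhat(H)$, so $H$ is not $R$-critical.

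For $P$-criticality one needs an upper bound on $\ihat(H)$. My first attempt would be induction on edges via Lemma~\ref{lemma:removeedge}: pick $e$ with $H-e$ still $2$-connected. Removing an edge can only increase counts, so $\ihat(H)\le\ihat(H-e)$, while $\irhat(H-e)=\irhat(H)+1$. Iterating until a vertex of degree $2$ appears, and then invoking Step~2, should bound $\ihat(H)$ by the value of a star.

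The alternative is a peeling argument via Corollary~\ref{cor:remove}. Using an open ear decomposition of $H$, build $H$ from an edge ($\ihat=3$), with each ear contributing at most its number of internal vertices minus $1$. The remaining difficulty is to show that this bound is at most $\max(|V_1|,|V_2|)+2$ when $\delta\ge3$, since $\delta\ge3$ forces many ears.
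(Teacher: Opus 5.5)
\textbf{Verdict: the proposal has a genuine gap.} Step~2 fails as written, and the $P$-half of Step~3, which is the heart of the proof, is left open.

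Your Step~1 and the $R$-half of Step~3 are essentially correct and match the paper. That covers the reduction to connected graphs, the leaf case, the cut-vertex case via Lemma~\ref{lemma:unions}, and the larger star beating $\irhat(H)\le|V(H)|/2$.

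Step~2 fails for two reasons. Merging the neighbours $u,w$ of $v$ deletes one vertex and, when $u,w$ have no other common neighbour, one edge, so $\irhat$ goes \emph{down} by $1$. Nor is $\ihat$ protected: in $C_6$, merging the two neighbours of a vertex yields a $C_4$ with a pendant edge, so $\ihat$ drops from $6$ to $-\infty$.

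Your ``fallback to $H-v$'' is where the real argument lies, and it is not routine. Let $H'$ be a proper collapse of $H-v$ with $\ihat(H')\ge\ihat(H-v)$. Re-attaching a new vertex $z$ to the images of $u$ and $w$ can destroy every injective copy, because the forced common neighbour may already be in use. The paper therefore passes to homomorphisms, $\Hom(H'+z,G_q)\ge\Hom(H',G_q)$, and uses Proposition~\ref{prop:max} to extract a collapse of $H'+z$ (a proper collapse of $H$) with $\ihat\ge\ihat(H)$.

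Moreover, when $\ihat(H-v)=\irhat(H-v)$, the induction hypothesis on $H-v$ gives nothing. The paper then needs sandwich arguments through $\hhat$ (its Cases 2.1.2 and 2.2.2). In the second, for $R$-critical $H$, it applies the induction hypothesis to a $P$-critical dominant collapse $H'$ of $H$ and closes the chain $\ihat(H-v)=\irhat(H)>\irhat(H')=\ihat(H')=\hhat(H)\ge\hhat(H-v)\ge\ihat(H-v)$. None of this is in your proposal.

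The more serious gap is the $P$-half of Step~3, which you explicitly leave open and which is the bulk of the paper. You need an absolute bound such as $\ihat(H)\le\max(|V_1(H)|,|V_2(H)|)+2$ for $2$-connected $H$ with $\delta(H)\ge3$, and neither sketch gives one.

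Your first sketch deletes edges via Lemma~\ref{lemma:removeedge} until a degree-$2$ vertex appears and then invokes Step~2. This conflates two kinds of statement. Step~2 yields non-criticality of $H-e$, not a numerical bound on $\ihat(H-e)$. Non-criticality also does not transfer back to $H$, because re-inserting $e$ into a collapse of $H-e$ can lower $\ihat$ arbitrarily.

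Your ear count is wrong for single-edge ears, which contribute at most $0$, not $-1$. Write $k_i$ for the number of internal vertices of the $i$-th ear. Charging $k_i-1$ to every ear would give $\ihat(H)\le 3+\sum_i(k_i-1)=2|V(H)|-|E(H)|=\irhat(H)$ for every $2$-connected $H$. This contradicts $\ihat(\bar P)=10>9=\irhat(\bar P)$. With the correct charge you get $\ihat(H)\le|V(H)|+1-N$, where $N$ is the number of ears with internal vertices. For cubic $H$ this equals $|V(H)|/2+t$, with $t\ge1$ the number of single-edge ears. You would need $t\le2$, and nothing in your argument controls $t$.

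The paper obtains $\ihat(H)\le|V(H)|/2+1$ from Lemma~\ref{lemma:game}. That lemma is a separate induction on edges for the stronger bound $\ihat(H)\le(|V(H)|+\max\{v_2(H),2\})/2$, which tolerates degree-$2$ vertices. Since stripping those vertices destroys $2$-connectivity, it must be proved jointly with the pseudoleaf bound for connected graphs with $\delta\ge3$. That in turn uses the block--bridge analysis of Propositions~\ref{prop:bridges} and~\ref{prop:finaltree}.
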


\begin{proof}[Proof of Theorem~\ref{thm:quasirandom}]

 Suppose that the theorem is not true. Let $H$ be a graph with $\hhat(H)\neq\hrhat(H)$. Suppose that $\hhat(H)>\hrhat(H)$ (the other case is analogous). By Proposition~\ref{prop:max}, there exists at least one collapse $H'$ of $H$ which satisfies $\ihat(H')=\hhat(H)$.
 Moreover, by Lemma \ref{lemma:dominantcollapse}, we can choose $H'$ to be $P$-critical. But then $\ihat(H')=\hhat(H)>\hrhat(H)\geq\irhat(H')$, contradicting Proposition \ref{prop:critical}.
\end{proof}

This is not a big improvement, because proving Proposition~\ref{prop:critical} is essentially as hard as proving Theorem~\ref{thm:quasirandom}. The main difference between the two is that Proposition~\ref{prop:critical} is written in a form that lends itself to be proved using induction. In general, it will be easier to relate the values of $\ihat(H)$ and $\irhat(H)$ to those of the subgraphs of $H$ than the values of $\hhat(H)$ and $\hrhat(H)$. 

\begin{proof}[Proof of Proposition~\ref{prop:critical}]
Suppose the statement is not true and let $H$ be a minimal counterexample (minimal with respect to number of vertices). We will do a thorough case analysis to show that $H$ cannot exist. 

We start by observing that $H$ must be connected. Indeed, if $H$ is not connected, then it is the disjoint union of two non-empty graphs $H_1$, $H_2$. Then by Lemma~\ref{lemma:unions}, we know that $\ihat(H_1)+\ihat(H_2)=\ihat(H)$ and $ \irhat(H_1)+\irhat(H_2)=\irhat(H)$. Thus we can assume without loss of generality that $\ihat(H_1)\not=\irhat(H_1)$. By minimality of $H$, we have that $H_1$ is not $P$-critical nor $R$-critical. That is, there exist proper collapses $H'_1$ and $H''_1$ of $H_1$ such that $\ihat(H'_1)\geq \ihat(H_1)$ and $\irhat(H''_1)\geq \irhat(H_1)$. But then again by Lemma~\ref{lemma:unions}, we obtain $\ihat(H'_1\dot\cup H_2)\geq \ihat(H)$ and $\irhat(H''_1\dot\cup H_2)\geq \irhat(H)$. Since $H'_1\dot\cup H_2$ and $H''_1\dot\cup H_2$ are proper collapses of $H$, we conclude that $H$ is not $R$-critical nor $P$-critical, proving that $H$ is not a counterexample in the first place.

Next we consider different cases depending on the minimum degree of $H$:

\textbf{Case 0: $\delta(H)=0$.} $H$ is in $\mathcal{B}_0$, and so it cannot be an isolated vertex. But $H$ must be connected, so we must conclude that $\delta(H)=0$ cannot happen.

\bigskip

\textbf{Case 1: $\delta(H)=1$.} Let $v$ be a vertex in $H$ of degree 1. Then Corollary~\ref{cor:remove} tells us that
\[
\ihat(H-v)=\ihat(H)-1.
\]
On the other hand, the definition of $\irhat$ says that
\[\irhat(H-v)=2|V(H-v)|-|E(H-v)|=2(|V(H)|-1)-(|E(H)-1)=2|V(H)|-|E(H)|-1=\irhat(H)-1.\]
Since $H$ is a counterexample, we must have $\ihat(H)\neq\irhat(H)$, and in particular by the two equations above,
\[
\ihat(H-v)=\ihat(H)-1\neq\irhat(H)-1=\irhat(H-v).
\]
But $H$ was a \emph{minimal} counterexample, so $H-v$ is not a counterexample, and hence it cannot be $P$-critical nor $R$-critical.
In particular, $H-v$ not being $P$-critical tells us that there is a proper collapse $(H-v)'$ of $H-v$ satisfying $\ihat(H-v)\leq\ihat((H-v)')$. Let $+v$ denote the operation of attaching the previously deleted vertex back to (the node representing) the original neighbour of $v$. With this notation, $H-v+v=H$, and $(H-v)'+v$ is a proper collapse of $H$. Then
\begin{align*}
    \ihat(H-v)&\leq\ihat((H-v)')\\
    \ihat(H)=\ihat(H-v)+1&\leq\ihat((H-v)')+1=\ihat((H-v)'+v)
\end{align*}
meaning that $H$ is not $P$-critical.

$H-v$ is also not $R$-critical, and so there is some proper collapse $(H-v)''$ of $H-v$ satisfying $\irhat(H-v)\leq\irhat((H-v)'')$, which implies that
\begin{align*}
\irhat(H)=\irhat(H-v)+1\leq\irhat((H-v)'')+1
&=2|V((H-v)'')|-|E((H-v)'')|+1\\
&=2(|V((H-v)''+v)|-1)-(|E((H-v)''+v)|-1)+1\\
&=2|V((H-v)''+v)|-|E((H-v)''+v)|
=\irhat\left((H-v)''+v\right).
\end{align*}
But this means that $H$ was actually not $R$-critical either, so in particular it is, after all, not a counterexample to the proposition.

\bigskip

\textbf{Case 2: $\delta(H)=2$.} Let $v$ be a vertex in $H$ of degree 2.

\hspace{1cm} \textbf{Case 2.1: $H$ is $P$-critical.} Then
\begin{equation}\label{eqn:inj_of_critical_H}
\hhat(H) = \ihat(H) \leq \ihat(H-v)
\end{equation}
by Proposition \ref{prop:max} and Corollary~\ref{cor:remove}.
Now we consider two cases depending on whether \(H-v\) is $P$-critical or not.

\hspace{2cm} \textbf{Case 2.1.1: \(H-v\) is not $P$-critical.} Then there exists a proper collapse \(H'\) of \(H-v\) such that \(\ihat(H') \geq \ihat(H-v)\).

By the definition of collapse, $H'$ is the image of $H-v$ after a homomorphism. Let $x$ and $y$ be the neighbours of $v$ in $H$, and let $x'$ and $y'$ be their images in $H'$ (potentially we have $x'=y'$). Construct a graph $H'+w$ by adding a new vertex $w$, and attaching it to $x'$ and $y'$. Then $H'+w$ is a proper collapse of $H$, obtained by extending the homomorphism $H-v\rightarrow H'$ to map $v$ to $w$.
Moreover, in every homomorphic copy of $H'$ in $G_q$, there is 1 or $q+1$ vertices where $w$ could be mapped, giving that $\Hom(H'+w,G_q)\geq\Hom(H',G_q)$ for every $q$, and subsequently $\hhat(H'+w)\geq\hhat(H')$.

Then
\begin{equation*}
    \begin{split}
        \max_{H''\in\mathcal{C}(H'+w)}\ihat(H'')=\hhat (H' + w) \geq \hhat (H') \geq \ihat(H') \geq \ihat(H-v) \geq \ihat (H),
    \end{split}
\end{equation*}
contradicting the criticality of $H$ in $\Gq$.

\hspace{2cm} \textbf{Case 2.1.2: \(H-v\) is $P$-critical.} Since \(H\) is a minimal counterexample to the proposition, we know that $H-v$ being $P$-critical must mean that
\[
\ihat (H-v) = \irhat (H-v).
\]

Also similarly as above, we have that $\hhat(H-v)\leq\hhat(H)$,
because there is always at least one choice for placing $v$ in $G_q$ after placing $H-v$.
After extending inequality (\ref{eqn:inj_of_critical_H}) with these expressions, we get
\[
\hhat(H-v)\leq\hhat(H)=\ihat(H)\leq\ihat(H-v)\leq\hhat(H-v),
\]
which therefore must have equality throughout.
But then
\[
\ihat(H)=\ihat(H-v)=\irhat(H-v)=2(|V(H)|-1)-(|E(H)|-2)=\irhat(H),
\]
and so $H$ could not be a counterexample to the proposition.

\hspace{1cm} \textbf{Case 2.2: $H$ is $R$-critical, but not $P$-critical.}
We consider two cases depending on whether \(\ihat(H-v) = \irhat (H-v)\) or not.

\hspace{2cm} \textbf{Case 2.2.1: \(\ihat(H-v) \neq \irhat(H-v)\).} By minimality of $H$, \(H-v\) is not $R$-critical. Thus there exists a proper collapse \(H'' \in \mathcal{C}(H-v)\) such that
\[\irhat(H'') \geq \irhat (H-v).\]
It is then possible to add a vertex \(w\) with \(\deg(w) \in\{ 1,2\}\), such that \(H'' + w \) is a proper collapse of \(H\), just as in Case 2.1.1. Then
\[
\irhat(H'' + w) \geq 2(|V(H'')|+1)-(|E(H'')|+2) = \irhat(H'') \geq \irhat(H-v) = \irhat(H),
\]
contradicting the criticality of \(H\) in $\Rq$.

\hspace{2cm} \textbf{Case 2.2.2: \(\ihat(H-v) = \irhat(H-v)\).}
Let $H'$ be a proper collapse of $H$ such that $\hhat(H)=\ihat(H')=\hhat(H')$ and $H'$ is $P$-critical. Such $H'$ exists because $H$ is not $P$-critical and by Lemma \ref{lemma:dominantcollapse}.
Then
\[
\ihat(H-v)=\irhat(H-v)=\irhat(H)>\irhat(H')
\stackrel{\text{minimality of }H}{=}
\ihat(H')=\hhat(H')=\hhat(H)\geq\hhat(H-v)\geq\ihat(H-v),
\]
where the inequality $\hhat(H)\geq\hhat(H-v)$ was explained twice earlier.
But $\ihat(H-v)>\ihat(H-v)$ is a contradiction.

\textbf{Case 3: $\delta(H)\geq3$.}
Then
\[
\irhat(H)
=2|V(H)|-|E(H)|
\leq \frac{|V(H)|}{2}
<\left\lceil\frac{|V(H)|}{2}\right\rceil+2=\irhat\left(K_{1,\lceil|V(H)|/2\rceil}\right),
\]
where the star $K_{1,\lceil|V(H)|/2\rceil}$ is a proper collapse of $H$ obtained by mapping all vertices of the smaller part to the core of the star (and if the larger class has more vertices than $\left\lceil\frac{|V(H)|}{2}\right\rceil$, merging some vertices of the larger class).
$H$ is therefore not $R$-critical.

\hspace{1cm}\textbf{Case 3.1: $H$ is not 2-connected.} Since $H$ is connected but not 2-connected, it must have a cut vertex. It is therefore expressible as the wedge sum of two graphs $H_1$ and $H_2$, each with fewer vertices than $H$. By Lemma~\ref{lemma:unions}, if $\ihat(H)\neq \irhat(H)$, then either $\ihat(H_1)\neq\irhat(H_1)$ or $\ihat(H_2)\neq\irhat(H_2)$. Without loss of generality, assume the former.

By the induction hypothesis, $H_1$ is not $P$-critical. This means that there exists a graph $H_1'$ which is a proper collapse of $H_1$ and which satisfies $\ihat(H_1)\leq\ihat(H_1')$. Let $v_1$ and $v_2$ be the vertices at which $H_1$ and $H_2$ are joined to form $H$. Let $H'$ be formed by taking the wedge sum of $H_1'$ at the homomorphic image of $v_1$ in $H'_1$, and of $H_2$ at $v_2$. This graph is a proper collapse of $H$ and, by Lemma~\ref{lemma:unions}, we have $\ihat(H)\leq\ihat(H')$, thus $H$ is not $P$-critical, so $H$ is not a counterexample to our statement.

\hspace{1cm}\textbf{Case 3.2: $H$ is $2$-connected.} In this case, by Lemma~\ref{lemma:game}(\ref{item:deg2}), we have $\ihat(H)\leq \frac{|V(H)|}{2}+1$.
As before for $(R_q)_q$,
\[
\frac{|V(H)|}{2}+1
<\left\lceil\frac{|V(H)|}{2}\right\rceil+2=\ihat\left(K_{1,\lceil|V(H)|/2\rceil}\right),
\]
meaning
that $H$ is not $P$-critical. Hence $H$ cannot be a counterexample to our statement.
\end{proof}

We have now reduced our problem to proving that $\ihat(H)\leq\frac{|V(H)|}{2}+1$ for $2$-connected graphs with $\delta(H)\geq 3$. This is a substantial step down from our main theorem. Unfortunately this statement by itself is not easy to prove by induction, so just like in Proposition~\ref{prop:critical}, we will use induction on a more general statement. In order to state it properly, we will need to define some concepts related to connectivity.

\begin{defi}
Let $H$ be a graph. A \emph{bridge} is an edge $e=uv$ satisfying that in the graph $H-e$, the vertices $u$ and $v$ lie in different components. 
\end{defi}

\begin{defi}
Let $H$ be a graph. A \emph{block} of $H$ is an induced subgraph $B\subseteq H$ which is maximally $2$-connected (that is, it is not a proper superset of another induced $2$-connected subgraph of $H$) and it is not a bridge. This definition differs slightly from the more traditional definition, in which bridges are not excluded.
\end{defi}

This redefinition of block means that several classical results about them need to be rephrased. For example, every edge which is not a bridge is contained in exactly one block. Another result that can be rephrased is the existence of the block-cut tree:

\begin{lemma}[\cite{harary}, Chapter 4]\label{lemma:blockcut}
Let $H$ be a connected graph, let $S_1$, $S_2$ and $S_3$ be the sets of its blocks, bridges and cut vertices, respectively. Construct a graph $G$ by taking $S_1\cup S_2\cup S_3$ as the vertex set, and connecting each cut vertex $v$ to every block and bridge that contains $v$. Then $G$ is a tree.
\end{lemma}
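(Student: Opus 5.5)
The plan is to treat this as the standard block--cut tree theorem from \cite{harary}, adapted to our convention that bridges are excluded from blocks. The strategy is to reduce to the classical statement. In the classical setting, the \emph{classical blocks} of a connected graph $H$ are its maximal connected subgraphs without a cut vertex, i.e.\ maximal $2$-connected subgraphs together with bridges (and $K_1$ if $H$ is a single vertex). The classical block-cut graph, whose vertices are the classical blocks and the cut vertices, with a cut vertex adjacent to each classical block containing it, is a tree. So the first step is to check that $S_1\cup S_2$ is exactly the set of classical blocks, and that the incidence relation ``cut vertex $v$ lies in $X$'' agrees in both descriptions. Then $G$ is literally the classical block-cut tree.

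For the identification, I would argue as follows. A bridge $uv$ forms a maximal subgraph without a cut vertex. No larger subgraph without a cut vertex can contain the edge $uv$, since removing $uv$ separates $u$ from $v$, so any such larger subgraph would have $u$ or $v$ as a cut vertex. Conversely, a classical block with at least three vertices is $2$-connected and maximal, so it lies in $S_1$. The only other classical block of two vertices is a single edge, and that edge is maximal exactly when it lies on no cycle, i.e.\ when it is a bridge. One must also check that each element of $S_1$ is induced. This holds because adding a chord to a $2$-connected subgraph keeps it $2$-connected, so maximality forces the subgraph to be induced. The degenerate case where $H$ is a single vertex is excluded by $H\in\mathcal{B}_0$.

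If a self-contained argument is preferred, I would prove the tree property directly. For connectedness, I would walk along a path in $H$: consecutive edges lie in a common block or bridge, or else share a cut vertex, which gives a walk in $G$. For acyclicity, suppose $G$ has a cycle $X_1 v_1 X_2 v_2\cdots X_k v_k X_1$ with $k\ge2$. Choosing paths inside each $X_i$ between $v_{i-1}$ and $v_i$ produces a cycle in $H$ meeting several of the $X_i$. The union of $X_1,\dots,X_k$ together with this cycle is then $2$-connected, which contradicts the maximality of $X_1$; if $X_1$ is a bridge, it contradicts the fact that a bridge lies on no cycle.

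The main obstacle is this acyclicity step, specifically showing that the union is $2$-connected. I would handle it with the ear-type fact that a $2$-connected graph plus a path between two distinct vertices of it remains $2$-connected, together with the fact that two $2$-connected subgraphs sharing at least two vertices have a $2$-connected union. Since the statement is classical, I would ultimately just cite \cite{harary} after the identification in the first two paragraphs.
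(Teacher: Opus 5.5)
Your proposal is correct and matches the paper, which gives no proof of its own and simply cites the classical block--cut tree theorem from Harary. Your check that $S_1\cup S_2$ is exactly the set of classical blocks, with the same incidences to cut vertices and the single-vertex case excluded, is the translation the paper leaves implicit when it remarks that classical results must be rephrased for its modified notion of block.
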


\begin{defi}
Let $H$ be a graph. A block $B$ of $H$ is a \emph{pseudoleaf} if it contains at most one vertex which is adjacent to vertices outside of $B$. If such a vertex exists, we call it the \emph{linking vertex} of $B$.
\end{defi}

\begin{corollary}\label{cor:pseudoleaf}
A graph $H$ with $\delta(H)\geq 2$ and which contains a bridge has at least two pseudoleaves.
\end{corollary}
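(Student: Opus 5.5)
We argue on the block-cut tree of Lemma~\ref{lemma:blockcut}. Here $H$ is assumed connected, as is implicit in the statement, since it is the setting in which the block-cut tree is defined. Let $T$ be the tree built from $H$, with the blocks, bridges and cut vertices of $H$ as nodes.

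The first step shows that every leaf of $T$ is a block. A leaf of $T$ is either a block or a bridge, since a cut vertex lies in at least two blocks or bridges. Suppose a leaf is a bridge $uv$. Then it meets exactly one cut vertex, so one of its endpoints, say $u$, is not a cut vertex. The vertex $u$ then lies in no other block or bridge, so every edge at $u$ belongs to this single bridge. Hence $d(u)=1$, contradicting $\delta(H)\geq 2$.

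The second step shows that $T$ has at least two leaves. Since $H$ contains a bridge, $S_2\neq\emptyset$. A bridge $uv$ belongs to the tree, and each endpoint of a bridge has degree at least $2$ in $H$. So each endpoint has another incident edge, which lies in some block or bridge different from $uv$. This makes both $u$ and $v$ cut vertices, so the bridge node has degree $2$ in $T$. Therefore $T$ has at least three nodes and is not a single vertex, so it has at least two leaves. By the first step, both leaves are blocks $B_1$ and $B_2$.

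The third step shows that every leaf block $B$ is a pseudoleaf. Since $B$ is a leaf of $T$, it is adjacent to exactly one cut vertex $c$. Suppose a vertex $x\in B$ with $x\neq c$ had a neighbour $y\notin B$. The edge $xy$ would lie in a block or bridge different from $B$, since blocks are induced and $y\notin B$. Then $x$ would belong to two members of $S_1\cup S_2$, making $x$ a cut vertex adjacent to $B$ in $T$, which is a contradiction. So $c$ is the only possible linking vertex, and $B$ is a pseudoleaf. The two distinct leaves $B_1$ and $B_2$ therefore give two pseudoleaves.

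The only delicate point is the first step: ruling out bridge leaves. This is exactly where $\delta(H)\geq2$ and the convention that bridges are not blocks are both used.
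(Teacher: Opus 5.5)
Your proof is correct and follows the same route as the paper's: leaves of the block-cut tree cannot be cut vertices, and by $\delta(H)\geq 2$ they cannot be bridges; the tree is not a single node; and leaf blocks are pseudoleaves. You spell out the last two steps in more detail than the paper does. One small point: the statement does not assume $H$ is connected, so instead of calling connectivity implicit you should reduce to the component containing the bridge, as the paper does. This costs nothing, since no edges leave that component and so a pseudoleaf of the component is a pseudoleaf of $H$.
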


\begin{proof}
We can assume that $H$ is connected, otherwise restrict to the component of $H$ that contains the bridge.
Since $\delta(H)\geq2$, the graph $H$ cannot be $K_2$, and so the block-cut tree $T$ of $H$ is not a single vertex. Let us consider a leaf $v$ of $T$.
This leaf does not correspond to a cut-vertex of $H$, since a cut-vertex is incident to at least two blocks or bridges (since it separates them). $v$ cannot be a bridge either, since both endpoints of a bridge are cut vertices, again owing to $\delta(H)\geq2$. $v$ must therefore be a block which is incident to at most one cut vertex. In other words, $v$ corresponds to a pseudoleaf. But any tree that is not a single vertex contains at least two leaves, so we are done.
\end{proof}

\begin{lemma}\label{lemma:game}
\begin{enumerate}[(a)]
\item\label{item:deg2} Let $H$ be a $2$-connected graph with $\delta(H)\geq2$, and let $v_2(H)$ be the number of its vertices of degree 2. Then $\ihat(H)\leq \frac{|V(H)|+\max\{v_2(H),2\}}{2}$.
\item\label{item:leaves} Let $H$ be a connected graph with $\delta(H)\geq3$, and let $\ell(H)$ be its number of pseudoleaves. Then $\ihat(H)\leq \frac{|V(H)|+\max\{\ell(H),2\}}{2}$.
\end{enumerate}
\end{lemma}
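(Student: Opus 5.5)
We prove \eqref{item:deg2} and \eqref{item:leaves} simultaneously, by induction on $|V(H)|+|E(H)|$. The basic tool is a sequential embedding bound, a kind of game. Order $V(H)$ as $u_1,\dots,u_n$ and embed the vertices one at a time into $G_q$. Call $u_j$ \emph{free}, \emph{semi-forced} or \emph{forced} according to whether it has $0$, $1$, or at least $2$ neighbours among $u_1,\dots,u_{j-1}$. A free vertex has at most $q^2+q+1$ images, a semi-forced one at most $q+1$, and a forced one at most one, because two distinct points lie on at most one common line and dually. Hence
\[
\ihat(H)\le \min_{\text{orderings}}\bigl(2\cdot\#\{\text{free}\}+\#\{\text{semi-forced}\}\bigr),
\]
and the task is to find an ordering whose cost is at most $\frac{|V(H)|+\max\{v_2(H),2\}}{2}$, respectively $\frac{|V(H)|+\max\{\ell(H),2\}}{2}$. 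Corollary~\ref{cor:remove} (deleting a vertex of degree at least $2$ does not decrease $\ihat$) and Lemma~\ref{lemma:unions} (wedge sums cost $-2$, and Lemma~\ref{lemma:homtransitivity} makes the gluing step free of losses) are the other tools.

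For \eqref{item:deg2} we use an ear decomposition of the $2$-connected graph $H$: a cycle $C$ followed by ears $P_1,\dots,P_m$. An even cycle on $2k$ vertices costs $2+(2k-2)=2k=\frac{2k+v_2(C)}{2}$. When an ear with $k\ge 1$ internal vertices is attached, we embed its internal vertices along the path starting from one endpoint: they are semi-forced except the last one, which is forced, so the cost rises by $k-1$. On the right-hand side $|V|$ rises by $k$, and $v_2$ gains the $k$ internal vertices but loses at most the two endpoints. So $\max\{v_2,2\}$ drops by at most $2-k$, and the right-hand side rises by at least $k-1$. The case $v_2^{\rm old}<2$ is covered because then $v_2^{\rm new}\ge k$. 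Thus every nontrivial ear is paid for.

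Trivial ears, that is single chords, are the delicate point and \emph{the main obstacle}. They cost nothing in the game, but they may convert two degree-$2$ vertices into degree-$3$ vertices and so lower the right-hand side by $1$. My plan is to choose the ear decomposition so that chords are postponed and absorbed. For a chord $xy$ with $x$ of degree $2$ in the current graph, we re-route so that $x$ becomes the \emph{last}, forced vertex of the preceding ear rather than a semi-forced one. This saves one unit precisely when $v_2$ drops. If no such re-routing is available, we fall back on Lemma~\ref{lemma:removeedge} and Chartrand's theorem: delete a vertex $v$ with $H-v$ still $2$-connected, use $\ihat(H)\le\ihat(H-v)$ from Corollary~\ref{cor:remove}, and check the degree-$2$ count via the induction hypothesis. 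I expect this bookkeeping, pairing each drop of $v_2$ with a newly forced vertex, to need the most care.

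For \eqref{item:leaves}, where $\delta(H)\ge 3$ and $H$ is connected, we argue as follows. If $H$ is $2$-connected, then $\ell(H)\le 1$ and \eqref{item:deg2} with $v_2=0$ gives $\frac{|V|}{2}+1$. Otherwise $H$ has a cut vertex or a bridge, and by Lemma~\ref{lemma:blockcut} and Corollary~\ref{cor:pseudoleaf} we can split off a pseudoleaf $B$ at its linking vertex. We decompose $H$ as a wedge sum $B\vee H_2$, or as a union $B$ and $H_2$ joined by a bridge, and apply Lemma~\ref{lemma:unions}. Inside $B$, only the linking vertex can have degree $2$, so \eqref{item:deg2} gives $\ihat(B)\le\frac{|V(B)|+2}{2}$. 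On the remainder $H_2$, possibly after restoring minimum degree $3$ via Corollary~\ref{cor:remove}, we use induction with $\ell(H_2)\le\ell(H)-1+1$. Summing with the $-2$ (respectively $-1$) correction from Table~\ref{table:tableofunions} shows that each pseudoleaf contributes at most its $\frac12$ to the bound. Here the extra $+\frac{\max\{\ell,2\}}{2}$ has to be tracked carefully when $H_2$ itself has at most one pseudoleaf.
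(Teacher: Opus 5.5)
\textbf{Part~(a).} Your sequential-embedding bound and your accounting for ears with at least one internal vertex are correct. However, part~(a) has a genuine gap exactly at the chords, and neither remedy you propose closes it.

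An ordering that follows an open ear decomposition costs exactly $2|V(H)|-|E(H)|+c$, where $c$ is the number of chords. When $\Delta(H)\le 3$ and $v_2(H)\ge 2$, the target $\frac{|V(H)|+v_2(H)}{2}$ equals $2|V(H)|-|E(H)|$, so you would need a chord-free open ear decomposition. That can be impossible. Take two copies of $Q_3$, delete an edge $x_iy_i$ from the $i$-th copy, and add paths $x_1ax_2$ and $y_1by_2$ (with $x_1,x_2$ in the same class). This $H$ is $2$-connected with $\Delta(H)=3$ and $v_2(H)=2$.
\begin{itemize}
\item The last ear of any open ear decomposition has internal vertices of degree $2$ in $H$, so it is a chord or one of the paths through $a$ or $b$. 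Deleting $a$ makes $b$ a cut vertex and vice versa, so the last ear must be a chord.
\item Re-routing a chord into an ear, as you describe, just produces another open ear decomposition, so it cannot remove this chord.
\item The fallback fails too. Lemma~\ref{lemma:removeedge} needs $\delta\ge 3$. Any $v$ with $H-v$ $2$-connected must have degree $3$ and three neighbours of degree $3$. Hence $v_2(H-v)=v_2(H)+3$, and induction only gives $\ihat(H)\le\frac{|V(H)|+v_2(H)}{2}+1$, one too many.
\end{itemize}

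\textbf{The missing idea for $\delta(H)=2$.} This is how the paper handles it: give up $2$-connectivity.
\begin{itemize}
\item Repeatedly delete a vertex of current degree at most $2$. By Corollary~\ref{cor:remove}, each step costs at most $2-d(v_i)$.
\item Bound the components $W_i$ of the remaining core, which has $\delta\ge 3$, by part~(b).
\item Pay for this with an edge count. All but $v_2(H)$ peeled vertices have degree at least $3$ in $H$, and $2$-connectivity of $H$ forces at least $\max\{\ell(W_i),2\}$ edges out of each $W_i$.
\end{itemize}
This is why (a) and (b) must be proved jointly. In your language, the reversed peeling order gives every peeled vertex at most two back-neighbours. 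In the example above everything gets peeled, and one obtains exactly $2|V(H)|-|E(H)|$. For $\delta(H)\ge 3$, the paper deletes an edge via Lemma~\ref{lemma:removeedge}, not a vertex, because $v_2(H-e)\le 2$ leaves the right-hand side unchanged.

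\textbf{Part~(b).} This part is also incomplete. After you split off a pseudoleaf, the remainder can have a vertex of degree $1$ or $2$ at the attachment point, for instance the far end $z$ of a bridge with $d_H(z)=3$. The induction hypothesis of (b) then does not apply. Restoring $\delta\ge 3$ via Corollary~\ref{cor:remove} can cascade into deletions of vertices of degree $1$ or $0$; each of these costs more than the $\frac12$ saved, and the cascade can disconnect the graph.

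The paper's Proposition~\ref{prop:finaltree} avoids this in three steps:
\begin{itemize}
\item It converts a vertex shared by two blocks into a bridge, which raises $\ihat$ by exactly $1$ and keeps $\delta\ge 3$.
\item At a vertex lying in no block, it averages over the pairs $W_{i,i+1}$.
\item It counts on the tree of contracted blocks, applying (a) to \emph{every} block with its own $\max\{v_2(B),2\}$, not only to pseudoleaves.
\end{itemize}
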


The statement that we wanted to prove, namely $\ihat(H)\leq\frac{|V(H)|}{2}+1$ for $2$-connected graphs with $\delta(H)\geq 3$, is implied by each of \ref{item:deg2} and \ref{item:leaves}.

Before we start the proof, let us briefly discuss the role of $\max\{v_2(H),2\}$ and $\max\{\ell(H),2\}$ in Lemma~\ref{lemma:game}. For the former, let $H$ be 2-connected. Suppose that $H$ is an induced subgraph of a strictly larger graph $H'$ with $\delta(H')\geq 3$, and that $H$ is not a pseudoleaf in $H'$. What is the least possible number of edges in $H'$ that go from $H$ to the rest of the graph?
Either $H$ is a block of $H'$, in which case there are at least two such edges by the definition of pseudoleaf, or $H$ is not a block of $H'$. In the latter case, $H$ is a single vertex or a bridge, so there are at least three edges from $H$ to $H'\setminus H$ owing to $\delta\left(H'\right)\geq3$, or $H$ is not \emph{maximally} 2-connected, again giving us at least two edges from $H$ to $H\setminus H'$.
Independently, since $\delta(H')\geq 3$, every vertex of degree 2 in $H$ must be incident to an edge connecting it to $H'\setminus H$, so together we get the lower bound of $\max\{v_2(H),2\}$ on the number of edges between $H$ and $H'\setminus H$.

Next let $H$ be a connected graph with $\delta(H)\geq 3$, and suppose that $H$ is an induced subgraph of a strictly larger $2$-connected graph $H'$. What is the least possible number of edges in $H'$ that go from $H$ to the rest of the graph? There must be at least two, since $H'$ is $2$-connected. In addition, if $H$ is not $2$-connected and $S$ is a pseudoleaf of $H$, then there must exist an edge of $H'$ that connects $S$ to $H'\setminus H$ that does not hit the linking vertex of $S$. Since every vertex that is not a linking vertex is contained in at most one pseudoleaf, there are at least $\ell(H)$ edges from $H$ to $H'\setminus H$, giving a lower bound of $\max\{\ell(H),2\}$.

On the other hand, both maxima are necessary: for example, the incidence graph $P$ of the Pappus configuration is a $2$-connected 3-regular graph with $|V(P)|=18$ and $\ihat(P)=10$.

\begin{proof}[Proof of Lemma~\ref{lemma:game}]

Suppose the statement is not true and let $H$ be a minimal counterexample (minimal with respect to number of edges this time) to either (\ref{item:deg2}) or (\ref{item:leaves}). We will do a thorough case analysis to show that $H$ cannot exist. Observe that if $H$ is $2$-connected and $\delta(H)\geq 3$, then both parts of the statement give the same bound, since $v_2(H)=0$ and $\ell(H)=1$.

\textbf{Case 1: $\delta(H)=2$.} We produce a graph sequence $H_0, H_1, H_2, \dots$ as follows: we start with $H_0=H$. Given $H_i$, if it contains at least one vertex with degree at most 2, choose one of them as $v_i$ and set $H_{i+1}=H_i-v_i$. Otherwise we stop the sequence. The last term of the sequence, $H_k$, is either empty or satisfies $\delta(H_k)\geq 3$. Note that since $\delta(H)=2$, we have $k>0$.

Let $W_1, W_2, \dots, W_r$ be the connected components of $H_k$ (where we potentially have $r=0$ if $H_k$ is empty). By Lemma~\ref{lemma:unions} and minimality of $H$, we have $\ihat(H_k)=\sum_{i=1}^r\ihat(W_i)\leq \sum_{i=1}^r\frac{|V(W_i)|+\max\{\ell(W_i),2\}}{2}$. Since $v_i$ has degree at most 2 in $H_i$, by Corollary~\ref{cor:remove} we have $\ihat(H_{i+1})\geq\ihat(H_i)-2+d(v_i)$.
Chaining the $k$ successive inequalities, we obtain
\begin{align*}
\ihat(H_k)&\geq \ihat(H_0)-2k+\sum_{i=0}^{k-1}d(v_i)\\
&=\ihat(H)-2k+|E(H)\setminus E(H_k)|.
\end{align*}
Let us estimate the value of $|E(H)\setminus E(H_k)|$. Consider the graph $W_0$ formed by the vertices $V(H)$ and edges in $E(H)\setminus E(H_k)$. As pointed out before the start of the proof of Lemma~\ref{lemma:game}, the vertices of each $W_i$ are incident to at least $\max\{\ell(W_i),2\}$ edges in $H\setminus W_i$. On the other hand, since all but $v_2(H)$ vertices in $\{v_0, v_1, \dots, v_{k-1}\}$ have degree at least 3 in $H$ (and the remaining vertices have degree 2), the sum of their degrees in $W_0$ is at least $3k-v_2(H)$. Adding the degrees of all vertices, we have that $|E(W_0)|\geq\frac{3k-v_2(H)}{2}+\sum_{i=1}^r\frac{\max\{\ell(W_i),2\}}{2}$. We conclude that \begin{align*}
    \ihat(H)\leq& \ihat(H_k)+2k-|E(H)\setminus E(H_k)|\\
    \leq& \sum_{i=1}^r\frac{|V(W_i)|+\max\{\ell(W_i),2\}}{2}+2k-\frac{3k-v_2(H)}{2}-\sum_{i=1}^r\frac{\max\{\ell(W_i),2\}}{2}\\
    =&\frac{k+v_2(H)}{2}+\sum_{i=1}^r\frac{|V(W_i)|}{2}\leq \frac{|V(H)|+\max\{v_2(H),2\}}{2},
\end{align*}
as we wanted to prove. Hence $H$ is not a counterexample to the lemma.

\textbf{Case 2: $\delta(H)\geq 3$, $H$ is $2$-connected.}
By Lemma \ref{lemma:removeedge} we can remove an edge $e$ so that $H-e$ is still $2$-connected. The number of injections could not decrease as we remove $e$, the number of vertices does not change, and $v_2(H)=0$, $v_2(H-e)\le2$ thus $\max\{v_2(\cdot),2  \}$ does not change. Thus if $H$ is a counterexample, then so is $H-e$, but with fewer edges, contradicting minimality of $H$.

\textbf{Case 3: $\delta(H)\geq 3$, $H$ is not $2$-connected.} In this case $H$ has at least one cut-vertex, meaning that all the blocks of $H$ satisfy part (\ref{item:deg2}). The next lemma in our proof states precisely that in these circumstances, $H$ satisfies (\ref{item:leaves}):

\begin{proposition}\label{prop:finaltree}
Let $H$ be a connected graph with $\delta(H)\geq 3$ which is not $2$-connected. If every block $B$ of $H$ satisfies $\ihat(B)\leq\frac{|V(B)|+\max\left\{v_2(B),2\right\}}{2}$, then $H$ satisfies $\ihat(H)\leq \frac{|V(H)|+\max\{\ell(H),2\}}{2}$.
\end{proposition}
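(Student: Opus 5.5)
The plan is to decompose $H$ along its block-cut tree (Lemma~\ref{lemma:blockcut}) and compute $\ihat(H)$ exactly in terms of the blocks, using Lemma~\ref{lemma:unions}. Then I compare the result with $|V(H)|/2$, using the hypothesis on each block together with $\delta(H)\geq 3$ to control the error terms.

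\textbf{Setting up the decomposition.} Let $T$ be the block-cut tree of $H$. Let $B_1,\dots,B_m$ be the blocks, let $\beta$ be the number of bridges, and let $c_1,\dots,c_C$ be the cut vertices. Since $H$ is connected with $\delta(H)\ge 3$, every vertex lies in some block or bridge. I reassemble $H$ by traversing $T$ from a root. Each block or bridge is glued to what has already been built by a wedge sum at a cut vertex; a bridge is treated as a copy of $K_2$, which has $\ihat(K_2)=3$. By Lemma~\ref{lemma:unions}, each wedge costs exactly $2$ in $\ihat$. The vertex count satisfies the same linear recursion with cost $1$ per wedge. Since there are $\sum_c (d_T(c)-1)=m+\beta-1$ gluings, I get
\[
\ihat(H)=\sum_{i}\ihat(B_i)+3\beta-2(m+\beta-1),\qquad |V(H)|=\sum_i|V(B_i)|+2\beta-(m+\beta-1).
\]
Inserting the hypothesis $\ihat(B_i)\le \frac{|V(B_i)|+\max\{v_2(B_i),2\}}{2}$ reduces the claim to the purely combinatorial inequality
\[
\sum_{i=1}^m\bigl(\max\{v_2(B_i),2\}-3\bigr)+\beta+3\;\le\;\max\{\ell(H),2\}.
\]

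\textbf{Proving the combinatorial inequality.} The key observation is that a vertex of degree $2$ in $B_i$ has degree at least $3$ in $H$. It must therefore be a cut vertex of $H$, so $v_2(B_i)\le d_T(B_i)$. Hence a pseudoleaf block, which has $d_T(B_i)=1$, contributes $-1$, while any other block contributes at most $d_T(B_i)-3$. Summing and using $\sum_i d_T(B_i)+2\beta=\#E(T)=m+\beta+C-1$, the left-hand side is at most roughly $\ell(H)+C-2m+2$. It then remains to show $C\le 2m-2$, or a corrected version of this absorbing the bridges. For this I use $\delta(H)\ge 3$ at cut vertices. A cut vertex incident only to bridges must have $d_T(c)\ge 3$. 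The degree-sum identity $\sum_c d_T(c)=m+\beta+C-1$ then lets me trade bridges against cut vertices.

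\textbf{Main obstacle.} I expect the bookkeeping in the last step to be the main difficulty. The crude bound $v_2(B_i)\le d_T(B_i)$ is lossy for blocks of tree-degree $2$, and long chains of bridges need care. I would first try a charging argument: give each edge of $T$ weight $1/2$ at each end, and verify locally at every cut vertex, using $\delta(H)\ge 3$, that the total charge is nonpositive. The pseudoleaves are the only nodes left with positive charge, which yields $\max\{\ell(H),2\}$. The degenerate cases $\ell(H)\le 1$ need a separate check; by Corollary~\ref{cor:pseudoleaf}, when a bridge exists these reduce to the case $\ell(H)\ge 2$.
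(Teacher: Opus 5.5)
Your approach is correct, and it takes a genuinely different route from the paper's. The reduction is exact. Iterating the wedge-sum row of Lemma~\ref{lemma:unions} along the block-cut tree, with $\ihat(K_2)=3$ for each bridge, gives precisely your inequality $\sum_i(\max\{v_2(B_i),2\}-3)+\beta+3\le\max\{\ell(H),2\}$. Your bound of its left-hand side by $\ell(H)+C-2m+2$ is also exact, not rough: the $\beta$ terms cancel, so no ``corrected version'' is needed, and it really suffices to prove $C\le 2m-2$.

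That last step follows from the two facts you already list. Let $b_c$ be the number of blocks containing the cut vertex $c$. Every cut vertex has $d_T(c)\ge 2$, and $d_T(c)=\deg_H(c)\ge 3$ when $b_c=0$. Hence $d_T(c)+b_c\ge 3$ for every cut vertex $c$. Now sum over cut vertices, using $\sum_c d_T(c)=|E(T)|=m+\beta+C-1$ and $\sum_c b_c=\sum_i d_T(B_i)=m-\beta+C-1$. This gives $3C\le 2m+2C-2$, so no charging scheme is needed.

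The case $\ell(H)\le 1$ also needs no separate check, since $\ell(H)+C-2m+2\le \ell(H)\le\max\{\ell(H),2\}$. In fact $\ell(H)\ge 2$ always, because the leaves of $T$ are pseudoleaf blocks whether or not there is a bridge. You should, however, state explicitly that $m\ge 2$, which follows from $\delta(H)\ge 3$ and $H$ not being $2$-connected. This is what guarantees that pseudoleaves are exactly the blocks with $d_T(B)=1$ and that every other block has $d_T(B)\ge 2$.

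The paper argues by induction instead. It takes a counterexample with the fewest blocks and, among those, the most bridges; this is why Proposition~\ref{prop:bridges} is needed. The three cases are:
\begin{itemize}
\item Case 1: a cut vertex shared by two blocks is replaced by a bridge, which raises $\ihat$ by exactly one and the vertex count by one.
\item Case 2: $H$ is split at a vertex lying only on bridges, and the induction hypothesis is averaged over the cyclic pairs $W_{i,i+1}$.
\item Case 3: only when every vertex lies in exactly one block does it perform a direct count on the tree of contracted blocks. This is your computation in the special situation $\beta=m-1$, where every cut vertex lies in exactly one block.
\end{itemize}
Your global count handles all configurations at once and avoids both the extremal choice and Proposition~\ref{prop:bridges}. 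It also makes the slack explicit: it is $2m-2-C$ plus the loss in $v_2(B)\le d_T(B)$. The paper's induction never has to bound the number of cut vertices, at the price of two extra reduction steps.
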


This means that $H$ is not a counterexample to the lemma, completing the proof.
\end{proof}

The reason we state Proposition~\ref{prop:finaltree} as a standalone claim is because the induction step here is incompatible with the one in Lemma~\ref{lemma:game}: on some occasions we will actually want to increase the number of edges (this will happen when we add bridges between blocks). In order to properly define our induction we will need the following proposition:

\begin{proposition}\label{prop:bridges}
There exists a function $f$ such that every connected graph $H$ with $k$ blocks and $\delta(H)\geq 3$ has at most $f(k)$ bridges.
\end{proposition}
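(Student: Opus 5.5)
We bound the number of bridges through the block-cut tree of Lemma~\ref{lemma:blockcut}. Let $T$ be the block-cut tree of $H$, with vertex classes $S_1$ (blocks, $|S_1|=k$), $S_2$ (bridges) and $S_3$ (cut vertices). We first record the degree constraints forced by $\delta(H)\geq 3$. Every vertex of $H$ lies in at least one block or bridge. A vertex lying in no block is incident only to bridges, and since it has degree at least $3$ it is a cut vertex of degree at least $3$ in $T$. Each bridge $uv$ is a vertex of $T$ of degree exactly $2$, because both $u$ and $v$ have other incident edges and are therefore cut vertices. Every cut vertex has degree at least $2$ in $T$.

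Next we contract the bridges. Form the tree $T'$ on the vertex set $S_3\cup S_1\cup\{\text{vertices of } H \text{ lying in no block}\}$. Equivalently, take the forest $F$ obtained from $H$ by keeping only its bridge edges, and view $T'$ as $T$ with every bridge-node suppressed into an edge between its two cut vertices. Then the number of bridges equals the number of edges of $T'$ between cut vertices, so it suffices to bound $|V(T')|$ in terms of $k$. In $T'$ every leaf must be a block node. Indeed, a cut vertex lying in no block has degree at least $3$ in $T'$, since it has at least $3$ incident bridges. A cut vertex lying in some block has at least one bridge or a second block attached, so it has degree at least $2$ in $T'$. Hence $T'$ has at most $k$ leaves.

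The main obstacle is that $T'$ can still contain long paths of degree-$2$ cut vertices. We must rule these out, or at least bound their number. A degree-$2$ node of $T'$ that is a cut vertex $c$ lying in no block would have only $2$ incident bridges, contradicting $\deg_H(c)\geq 3$. So every degree-$2$ cut vertex of $T'$ lies in some block. For a path $c_1,c_2,\dots$ of cut vertices joined by bridges, each interior $c_i$ must belong to a block. That block is attached to $c_i$ in $T'$, which would make $\deg_{T'}(c_i)\geq 3$ unless the path structure is broken. Carefully, the edges of $T'$ at $c_i$ consist of its two bridges plus at least one block, so $\deg_{T'}(c_i)\geq 3$. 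Therefore every interior node of $T'$ that is not a block node has degree at least $3$, and the block nodes number exactly $k$.

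Finally we count. Let $L$ be the number of leaves of $T'$, let $D_2$ be the number of its degree-$2$ nodes, and let $D_{\geq 3}$ be the number of nodes of degree at least $3$. In any tree, $D_{\geq 3}\leq L-2$. We have $L\leq k$, and $D_2\leq k$ because all degree-$2$ nodes are block nodes. Thus $|V(T')|\leq 3k$, so $T'$ has fewer than $3k$ edges, and the number of bridges is at most $f(k)=3k$. The degenerate case $k=0$ needs separate treatment: there $H$ would be a tree with $\delta\geq 3$, which is impossible for a finite graph. The count above covers this automatically, since the leaves of $T'$ would have to be block nodes.
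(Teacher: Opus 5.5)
There is a genuine gap at the step where you conclude that every interior node of $T'$ that is not a block node has degree at least $3$, and hence that $D_2\le k$. Your argument only handles a cut vertex incident to \emph{two} bridges: such a vertex must also lie in a block, so its $T'$-degree is at least $3$. It does not handle two other kinds of cut vertex, both of which have $T'$-degree exactly $2$ and are not block nodes:
\begin{itemize}
\item a cut vertex lying in exactly one block and incident to exactly one bridge;
\item a cut vertex lying in exactly two blocks and incident to no bridge.
\end{itemize}
These are not exotic. Take two copies of $K_{3,3}$ joined by a single edge. Then $T'$ is the path with nodes $B_1,u,v,B_2$, and both endpoints $u,v$ of the bridge are degree-$2$ non-block nodes. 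A chain of three copies of $K_{3,3}$ joined by two bridges has $D_2=5>3=k$, so the inequality $D_2\le k$ is false, not merely unproved. What you have actually shown is weaker: every degree-$2$ cut vertex of $T'$ has a block neighbour, so along a path of degree-$2$ nodes at most two cut vertices occur consecutively. The ``long paths of degree-$2$ cut vertices'' that you yourself flagged as the main obstacle are therefore only constrained, not eliminated. The number $3k$ you arrive at is in fact a valid bound (a careful count gives at most $2k-3$ bridges), but the argument as written does not prove it.

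To close the gap you need an actual count of the degree-$2$ cut vertices. Within your framework this works as follows. Suppressing degree-$2$ nodes shows that $T'$ is a union of at most $2L-3\le 2k-3$ maximal paths whose interior nodes all have degree $2$. By the observation above, such a path with $b$ interior block nodes has at most $2(b+1)$ interior cut vertices, and summing gives a linear bound on $|V(T')|$.

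The paper avoids this bookkeeping with an injection. For a bridge $uv$, replace each endpoint by a block containing it if one exists; otherwise keep the endpoint itself, which then has degree at least $3$ in the block-cut tree. The tree path between the two chosen nodes contains $uv$ as its only bridge. So bridges inject into pairs of nodes drawn from the $k$ block nodes and the at most $k-2$ nodes of degree at least $3$, giving at most $\binom{2k-2}{2}$ bridges.
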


\begin{proof}

Consider the block-cut tree $T$ of $H$ (the tree obtained in Lemma~\ref{lemma:blockcut}), and let $\mathcal{S}$ be the set of vertices corresponding to blocks of $H$. 
If $H$ has a bridge, then by Corollary \ref{cor:pseudoleaf},
every leaf of $T$ is a block, and thus $T$ contains at most $k$ leaves.
Since for trees,
\[
v_1(T)+v_2(T)+v_{\geq3}(T)-1=|V(T)|-1=|E(T)|=\frac{1}{2}\sum_{V(T)}d(v)\geq\frac{1}{2}\cdot v_1(T)+v_2(T)+\frac{3}{2}\cdot v_{\geq3}(T),
\]
we obtain that $T$ contains at most $k-2$ vertices of degree at least 3.
Denote the set of these vertices as $\mathcal{V}_3$.

Consider a bridge $uv$, and its corresponding vertex $z$ in $T$.
Because $\delta(H)\geq 3$, each of the vertices $u$ and $v$ (which are themselves cut vertices) is either
incident to at least three bridges, in which case it is in $\mathcal{V}_3$,
or it is contained in a block of $H$.
Hence there are two elements of $\mathcal{S}\cup \mathcal{V}_3$ such that $z$ is the unique bridge on the path between them. Therefore, $H$ contains at most ${|\mathcal{S}\cup \mathcal{V}_3| \choose 2}\leq{2k-2 \choose 2}$ bridges.
\end{proof}

\begin{proof}[Proof of Proposition~\ref{prop:finaltree}]
Suppose that the statement is not true. Consider a counterexample $H$ that minimizes the number of blocks, and among those counterexamples, select one that \emph{maximizes} the number of bridges. This is well-defined by Proposition~\ref{prop:bridges}. We will do a thorough case analysis to show that $H$ cannot exist.

\textbf{Case 1: $H$ contains a vertex $v$ contained in at least two blocks.} We can express $H$ as the wedge sum of two graphs $H_1$ and $H_2$, each containing at least one of the blocks that $v$ is in. Then $\ihat(H)=\ihat(H_1)+\ihat(H_2)-2$, by Lemma~\ref{lemma:unions}. Let $v_1$ and $v_2$ be the vertices of $H_1$ and $H_2$ corresponding to $v$, and let $H'$ be the union of $H_1$ and $H_2$ connected by the edge $v_1v_2$. 

By Lemma~\ref{lemma:unions} we have $\ihat(H')=\ihat(H_1)+\ihat(H_2)-1=\ihat(H)+1$. $H'$ contains the same blocks as $H$, as many pseudoleaves as $H$, one more vertex and one more bridge. In addition we have $\delta(H')\geq 3$, since $v_1$ and $v_2$ have degree at least 2 in their respective graphs because they lie in a block. Thus $\ihat(H')\leq \frac{|V(H')|+\max\{\ell(H'),2\}}{2}$, and hence
\[
\ihat(H)=\ihat(H')-1\leq \frac{|V(H)|+1+\max\{\ell(H),2\}}{2}-1<\frac{|V(H)|+\max\{\ell(H),2\}}{2}.
\]
So $H$ cannot be a counterexample.

\textbf{Case 2: $H$ contains a vertex $v$ not contained in any block.} In this case, every edge incident to $v$ is a bridge. Let $W_1, W_2, \dots, W_k$ be the components of $H-v$, and observe that $k=d(v)\geq 3$. Applying Lemma~\ref{lemma:unions} repeatedly, we obtain that $\ihat(H)=-(k-2)+\sum_{i=1}^k\ihat(W_i)$. 

For any $i,j\in [k]$, let $W_{i,j}$ be the union of $W_i$ and $W_j$ with an edge between the corresponding neighbours of $v$. Every pseudoleaf of $H$ is contained in $W_i$ for some $i\in[k]$, and it is a pseudoleaf of $W_{i,j}$ for all $j\neq i$, which means that $\ell(H)=\sum_{i=1}^k\ell\left(W_{i,i+1}\right)/2$, where the subindex $k+1$ is identified with 1. In addition, by Lemma~\ref{lemma:unions}, and using Corollary~\ref{cor:pseudoleaf}, we have
$\ihat(W_i)+\ihat(W_j)-1=\ihat(W_{i,j})\leq \frac{|V(W_i)|+|V(W_j)|+\ell\left(W_{i,j}\right)}{2}$,
so
\begin{align*}
\ihat(H)=& \sum_{i=1}^k\ihat(W_i)-k+2= \sum_{i=1}^k\frac{\ihat(W_i)+\ihat(W_{i+1})}{2}-k+2=\sum_{i=1}^k\frac{\ihat(W_i)+\ihat(W_{i+1})-1}{2}-\frac{k-4}{2}\\ \leq& \sum_{i=1}^k\frac{|V(W_i)|+|V(W_{i+1})|+\ell\left(W_{i,i+1}\right)}{4}-\frac{k-4}{2}=\frac{|V(H)|-1+\ell(H)}{2}-\frac{k-4}{2}\\ \leq&\frac{|V(H)|+\ell(H)}{2},
\end{align*}
since $k\geq 3$. Thus $H$ cannot be a counterexample.

\textbf{Case 3: Every vertex in $H$ is contained in exactly one block.} Let $H'$ be the graph obtained from $H$ by contracting every block of $H$. Because every vertex is contained in exactly one block of $H$, the vertices of $H'$ are in bijection with the blocks of $H$. $H'$ is connected and acyclic, so it is a tree.

Let $\mathcal{B}$ be the set of blocks of $H$ and let $B\in\mathcal{B}$. Let $v_B$ be the vertex of $H'$ corresponding to $B$. Notice that $\delta(B)\geq 2$, and any vertex in $B$ with degree 2 must be incident to a bridge in $H$, so $d(v_B)\geq v_2(B)$. If, in addition,$B$ is not a pseudoleaf, then it must be incident to at least two bridges, so in this case $d(v_B)\geq\max\{v_2(B),2\}$. On the other hand, if $B$ is a pseudoleaf, then there is exactly one vertex of $B$ which is incident to a bridge (and thus might have degree 2 in $B$). This means that $d(v_B)\geq\max\{v_2(B),2\}-1$. Altogether, that makes \[2|\mathcal{B}|-2=2e(H')=\sum_{B\in\mathcal{B}}d(v_B)\geq -\ell(H)+\sum_{B\in\mathcal{B}}\max\{v_2(B),2\}.\]

Applying Lemma~\ref{lemma:unions} repeatedly, we see that $\ihat(H)=\sum_{B\in\mathcal{B}}\ihat(B)-|\mathcal{B}|+1$, as every bridge decreases $\ihat$ by one. Since each block satisfies $\ihat(B)\leq\frac{|V(B)|+\max\{v_2(B),2\}}{2}$, we can put this together:

\[\ihat(H)\leq\sum_{B\in\mathcal{B}}\frac{|V(B)|+\max\left\{v_2(B),2\right\}}{2}-|\mathcal{B}|+1\leq \frac{|V(H)|+\ell(H)}{2}.\]

Thus $H$ cannot be a counterexample, and the proof is complete.

\end{proof}

\section{Discussion and open problems}

In our proof of log-convergence of $(G_q)_{q\in Q}$, we did not need to resort to heavily exploiting the algebraic properties of the underlying field $\mathbb{F}_q$. Crucially, we used that $\text{Aut}(G_q)$ acts 2-transitively on $V_1(G_q)$ to prove Lemma~\ref{lemma:unions}. This phenomenon of 2-transitivity holds exactly when a projective plane is $PG(2,K)$, for $K$ a division ring, but if one could establish Lemma~\ref{lemma:unions} using general arguments of projective geometry, a path of proving log-convergence of general sequences of the incidence graphs of finite projective planes would be open.

\begin{question}\label{quest:generalplanes}
    Let $(G_n)_{n=1}^\infty$ be a sequence of incidence graphs of finite projective planes which are not necessarily field planes. Does $(G_n)_{n=1}^\infty$ always log-converge?
\end{question}

In the classical notion of graph convergence, the Chung-Graham-Wilson theorem establishes the equivalence of several conditions on graph sequences, which are collectively referred to as ``quasirandomness". One of those conditions is convergence to the constant graphon, i.e., having the same limit as $G(n,p)$, where $p\in[0,1]$ is a fixed parameter. We would be interested in finding analogues of some of these conditions. In particular, we would be interested in two types of conditions as follows.
\begin{question}
Does there exist a finite family $\mathcal{F}\subseteq\mathcal{B}_0$, such that for every sequence $(G_n)_{n=1}^\infty$ of graphs in $\mathcal{B}_0$, if
\[
\lim\limits_{n\rightarrow\infty}\frac{\log \Hom(H,G_n)}{-\log t_B(K_2, G_n)}=\hrhat(H)
\]
holds for every $H\in \mathcal{F}$, then it also holds for every $H\in \mathcal{B}_0$?
\end{question}

An implication of the Chung-Graham-Wilson theorem \cite{ChungGrahamWilson89} is that the densities $t(K_2, G_n)$ and $t(C_4, G_n)$ are enough to determine quasirandomness. In particular, if $\ler{t(K_2, G_n)}_{n=1}^\infty$ converges and $\lim\limits_{n\rightarrow\infty}\frac{-\log t(C_4, G_n)}{-\log t(K_2, G_n)}=4$, then $\ler{G_n}_{n=1}^\infty$ is quasirandom. Szegedy asked whether a similar limit, with $t_B$ replacing $t$, is enough to guarantee that the sequence $\ler{G_n}_{n=1}^\infty$ converges to a linear combination of certain random graphs. 

If this was the case, we could answer Question~\ref{quest:generalplanes} affirmatively.
This is because, if $G$ is the incidence graph of a projective plane of order $n$, then $\Hom(C_4, G)=\Theta(n^4)$, with the image of each homomorphism being either a single edge or a cherry.

Another, more restrictive condition, involves the spectrum of the adjacency matrix of the graphs. The eigenvalues of the incidence graph of a projective plane depend only on its order $q$, and can be computed explicitly. If the log-convergence of a sequence of graphs to the quasirandom limit depends only on its spectrum, as is the case in the classical notion of convergence, then that would also imply the log-convergence of all incidence graphs of projective planes.

\begin{question}
Given an infinite sequence $\ler{H_n}_{n=1}^\infty$ of graphs in $\mathcal{B}_0$, is it possible to determine whether it converges to the same limit as $(R_n)_{n=1}^\infty$ just by knowing the spectrum of the graphs $H_n$?
\end{question}

In an opposite direction to Question~\ref{quest:generalplanes}, we return to considering only the standard projective planes $PG(2,p)$, for $p$ a prime number. As we saw in Section~\ref{section:preliminaries}, the sequence $\ler{\log_q\inj(G_2, G_q)}_{q\in Q}$ does not converge, as its terms are $-\infty$ when $q$ is an odd prime power and non-negative when $q$ is a power of 2. Can something similar still happen if $q$ is restricted to only take prime values? Remember that this restriction was included in Szegedy's original question about log-convergence of projective planes, even though it turned out to be unnecessary. Could this restriction be enough to guarantee convergence of the number of injective homomorphisms?

\begin{question}
Does $\ler{\log_p\inj(H, G_p)}_{p\in P}$ converge for all $H\in \mathcal{B}_0$?
\end{question}

Finally, the structurally perhaps most interesting question is to find a compact representation of the limit object of log-convergence, akin to graphons and permutons.  This would be the most useful piece of the theory to put in place, even if we do not phrase it as a numbered question, for it would be inherently informal.

\bibliography{libr}

@article{Szegedy2015SparseGL,
  title={Sparse graph limits, entropy maximization and transitive graphs},
  author={Bal{\'a}zs Szegedy},
  journal={arXiv: Combinatorics 1504.00858},
  year={2015}
}

@article{Sidorenko,
  author = {Sidorenko, Alexander},
  year = {1993},
  month = {06},
  pages = {201-204},
  title = {A correlation inequality for bipartite graphs},
  volume = {9},
  journal = {Graphs and Combinatorics},
  doi = {10.1007/BF02988307}
}

@article{chartrand1972critically,
  title={Critically $n$-connected graphs},
  author={Chartrand, Gary and Kaugars, Agnis and Lick, Don R},
  journal={Proceedings of the American Mathematical Society},
  volume={32},
  number={1},
  pages={63--68},
  year={1972}
}

@book{harary,
  author = {Harary, Frank},
  year = {1969},
  title = {Graph Theory},
  publisher = {Addison-Wesley Publishing Company}
}

@article{ChungGrahamWilson89,
  author  = {Chung, Fan R. K. and Graham, Ronald L. and Wilson, Richard M.},
  title   = {Quasi-random graphs},
  journal = {Combinatorica},
  volume  = {9},
  number  = {4},
  pages   = {345--362},
  year    = {1989}
}

@article{AldousExchangeable,
  author = {Aldous, David J.},
  title = {Representations for partially exchangeable arrays of random variables},
  journal = {Journal of Multivariate Analysis},
  volume = {11},
  number = {4},
  pages = {581--598},
  year = {1981}
}

@article{exchangeableVSgraphlimits,
  author = {Diaconis, Persi and Janson, Svante},
  title = {Graph limits and exchangeable random graphs},
  journal = {Rendiconti di Matematica e delle sue Applicazioni. Serie VII},
  volume = {28},
  pages = {33--61},
  year = {2008},
  archivePrefix = {arXiv},
  eprint = {0712.2749}
}

@article{LovSzegedy2006,
  author = {Lov{\'a}sz, L{\'a}szl{\'o} and Szegedy, Bal{\'a}zs},
  title = {Limits of dense graph sequences},
  journal = {Journal of Combinatorial Theory, Series B},
  volume = {96},
  number = {6},
  pages = {933--957},
  year = {2006}
}

@article{BorgsChayesLovSosVesztergombiI,
  author = {Borgs, Christian and Chayes, Jennifer T. and Lov{\'a}sz, L{\'a}szl{\'o} and S{\'o}s, Vera T. and Vesztergombi, Katalin},
  title = {Convergent graph sequences {I}: Subgraph frequencies, metric properties and testing},
  journal = {Advances in Mathematics},
  volume = {219},
  number = {6},
  pages = {1801--1851},
  year = {2008}
}

@article{LeftRightBounded,
author = {Borgs, Christian and Chayes, Jennifer and Kahn, Jeff and Lovász, László},
title = {Left and right convergence of graphs with bounded degree},
journal = {Random Structures \& Algorithms},
volume = {42},
number = {1},
pages = {1-28},
doi = {https://doi.org/10.1002/rsa.20414},
url = {https://onlinelibrary.wiley.com/doi/abs/10.1002/rsa.20414},
eprint = {https://onlinelibrary.wiley.com/doi/pdf/10.1002/rsa.20414},
year = {2013}
}

@article{HatLovSzegLocalGlobal,
  author = {Hatami, Hamed and Lov{\'a}sz, L{\'a}szl{\'o} and Szegedy, Bal{\'a}zs},
  title = {Limits of locally--globally convergent graph sequences},
  journal = {Geometric and Functional Analysis},
  volume = {24},
  pages = {269--296},
  year = {2014},
}

@article{BollRiordanSparse,
author = {Bollobás, Béla and Riordan, Oliver},
title = {Sparse graphs: Metrics and random models},
journal = {Random Structures \& Algorithms},
volume = {39},
number = {1},
pages = {1-38},
url = {https://onlinelibrary.wiley.com/doi/abs/10.1002/rsa.20334},
eprint = {https://onlinelibrary.wiley.com/doi/pdf/10.1002/rsa.20334},
year = {2011}
}

@article{LpGraphonsI,
  author  = {Borgs, Christian and Chayes, Jennifer T. and Cohn, Henry and Zhao, Yufei},
  title   = {An {$L^p$} theory of sparse graph convergence {I}: Limits, sparse random graph models, and power law distributions},
  journal = {Transactions of the American Mathematical Society},
  volume  = {372},
  number  = {5},
  pages   = {3019--3062},
  year    = {2019}
}

@article{GraphonProcesses,
  author = {Borgs, Christian and Chayes, Jennifer T. and Cohn, Henry and Holden, Nina},
  title = {Sparse Exchangeable Graphs and Their Limits
via Graphon Processes},
  journal = {Journal of Machine Learning Research},
  volume = {18},
  pages = {1--71},
  year = {2018}
}

@article{ActionConvergence,
  author = {Backhausz, {\'A}gnes and Szegedy, Bal{\'a}zs},
  title = {Action convergence of operators and graphs},
  journal = {Canadian Journal of Mathematics},
  volume = {74},
  number = {1},
  pages = {72--121},
  year = {2022}
}

@article{permutons,
  author  = {Hoppen, Carlos and Kohayakawa, Yoshiharu and Moreira, Carlos Gustavo
             and R{\'a}th, Bal{\'a}zs and Sampaio, Rudini Menezes},
  title   = {Limits of permutation sequences},
  journal = {Journal of Combinatorial Theory, Series B},
  volume  = {103},
  number  = {1},
  pages   = {93--113},
  year    = {2013}
}

@article{BenjaminiSchramm2001,
	author = {Itai Benjamini and Oded Schramm},
	title = {Recurrence of Distributional Limits of Finite Planar Graphs},
	journal = {Electronic Journal of Probability},
	volume = {6},
	year = {2001},
	pages = { Paper no. 23, 1-13},
	issn = {1083-6489},
    }

@article{ElekSzegedy2012,
  author  = {Elek, G{\'a}bor and Szegedy, Bal{\'a}zs},
  title   = {A measure-theoretic approach to the theory of dense hypergraphs},
  journal = {Advances in Mathematics},
  volume  = {231},
  number  = {3--4},
  pages   = {1731--1772},
  year    = {2012}
}

@article{matroids,
  author  = {B{\'e}rczi, Krist{\'o}f and Borb{\'e}nyi, M{\'a}rton and Lov{\'a}sz, L{\'a}szl{\'o} and T{\'o}th, L{\'a}szl{\'o} M{\'a}rton},
  title   = {Quotient-Convergence of Submodular Setfunctions},
  journal = {Combinatorica},
  volume  = {46},
  eid     = {6},
  year    = {2026}
}

@article{posetsJanson,
  author  = {Janson, Svante},
  title   = {Poset limits and exchangeable random posets},
  journal = {Combinatorica},
  volume  = {31},
  pages   = {529--563},
  year    = {2011}
}
\bibliographystyle{plain}

\end{document}